%% file: main.tex
\documentclass[11pt,reqno]{amsart}

\newif\ifjournalversion
\journalversionfalse

\usepackage[T1]{fontenc}
\usepackage{microtype}
\usepackage{mathtools,amssymb}
\usepackage{enumitem}
\usepackage{booktabs}
\usepackage[hidelinks]{hyperref}
\usepackage[nameinlink,noabbrev]{cleveref}

\hypersetup{
  pdftitle={Log-concavity of elementary coefficients for low-rank abelian Hessenberg graphs, with a counterexample in general},
  pdfauthor={Boris Kafidov},
  pdfsubject={An all-orders e-log-concavity theorem for abelian complement-Ferrers rank at most three and a 13-vertex counterexample in general},
  pdfkeywords={chromatic quasisymmetric function, e-log-concavity, Hessenberg function, natural unit interval graph, q-hit number}
}

\setlist[itemize]{leftmargin=*,topsep=4pt,itemsep=2pt}
\setlist[enumerate]{leftmargin=*,topsep=4pt,itemsep=2pt}
\allowdisplaybreaks[2]
\numberwithin{equation}{section}

\newtheorem{theorem}{Theorem}[section]

\newtheorem{proposition}[theorem]{Proposition}
\newtheorem{lemma}[theorem]{Lemma}
\theoremstyle{definition}
\newtheorem{definition}[theorem]{Definition}

\theoremstyle{remark}

\DeclareMathOperator{\asc}{asc}
\newcommand{\xx}{\mathbf{x}}
\newcommand{\N}{\mathbb{N}}
\newcommand{\F}{\mathcal F}
\newcommand{\coeff}[2]{[#1]#2}

\title[Low-rank $e$-log-concavity and a counterexample]
      {Log-concavity of elementary coefficients for low-rank abelian
       Hessenberg graphs, with a counterexample in general}
\author{Boris Kafidov}
\thanks{ORCID: \href{https://orcid.org/0009-0009-3146-3781}%
  {0009-0009-3146-3781}.}
\address{University of Toronto, Toronto, Ontario, Canada}
\email{b.kafidov@mail.utoronto.ca}
\date{31 July 2026}

\subjclass[2020]{Primary 05E05; Secondary 05C15, 05A20}
\keywords{Chromatic quasisymmetric function, elementary symmetric function,
Hessenberg function, natural unit interval graph, log-concavity, $q$-hit number}

\begin{document}

\begin{abstract}
Let $X_{G_h}(\xx;q)=\sum_{\mu\vdash n}c_\mu(q)e_\mu(\xx)$ be the
chromatic quasisymmetric function of the natural unit interval graph attached
to a Hessenberg function $h$.  We establish an infinite class, valid in all
orders, for which every nonzero polynomial $c_\mu(q)$ has a nonnegative,
log-concave coefficient sequence with interval support.  Namely, this holds
whenever $h$ is abelian and its
complement-Ferrers partition $\lambda$ satisfies
$\min\{\lambda_1,\ell(\lambda)\}\leq3$; equivalently, the diagram has at most
three rows or at most three columns.  Cubic interpolation reduces the
rank-three case to a uniform theorem for a difference of two products of
four $q$-integers, proved by positive decomposition, interval methods, and
finite-window smoothing.  The argument also yields explicit formulas for
every supported elementary coefficient in complement-Ferrers rank at most
three.  We also include a connected
13-vertex natural unit interval graph for which one elementary coefficient is
positive, palindromic, and unimodal but not log-concave, thereby recording the
failure of the unrestricted conjecture.  Thus low complement-Ferrers rank
gives a substantial positive regime even though coefficientwise
$e$-log-concavity fails in general.
\end{abstract}

\maketitle

\input{sections/introduction}
\input{sections/preliminaries}
\input{sections/rank_three_reduction}
\input{sections/sequence_lemmas}
\input{sections/quartic_primitive}
\input{sections/quartic_prefix}
\input{sections/rank_three_completion}
\input{sections/lower_rank}
\input{sections/counterexample}
\input{sections/verification_outlook}

\appendix
\input{sections/appendix_counterexample}

\bibliographystyle{amsplain}
\bibliography{references}

\end{document}

%% file: sections/introduction.tex
\section{Introduction}\label{sec:introduction}

For a natural unit interval graph $G$, the chromatic quasisymmetric function
$X_G(\xx;q)$ is symmetric in $\xx$ and has an elementary-basis expansion
\begin{equation}\label{eq:intro-expansion}
  X_G(\xx;q)=\sum_{\mu\vdash |V(G)|}c_\mu(q)e_\mu(\xx).
\end{equation}
The refined Shareshian--Wachs conjecture predicts that every $c_\mu(q)$ has
nonnegative, unimodal coefficients
\cite[Conjecture~1.3]{ShareshianWachs}.  Hikita recently proved the
Stanley--Stembridge conjecture.  More generally, he showed that
$c_\mu(q)\geq0$ for every real $q>0$
\cite[Corollaries~1.7 and~1.8]{Hikita}; coefficientwise positivity in
$\mathbb Z_{\geq0}[q]$ remains open.  Motivated by the $q$-hit-number formulas
in the abelian case, Abreu and Nigro conjectured the strictly stronger
assertion that every $c_\mu(q)$ is log-concave
\cite[Conjecture~5.3]{AbreuNigro}.  Kiem and Lee asked the corresponding
question for generalized Hessenberg functions
\cite[Question~4.10]{KiemLee}.  The same assertion for natural unit interval
graphs was subsequently formulated by Tom
\cite[Conjecture~5.6]{TomSigned} and by Sagan and Tom
\cite[Conjecture~4.2]{SaganTom}.

Several weaker or more specialized coefficientwise results are known.
Harada and Precup proved graded $e$-positivity in the abelian case
\cite[Corollary~7.26]{HaradaPrecup}; Cho and Huh gave a combinatorial proof
and explicit formulas for a subfamily
\cite[Theorems~3.3 and~4.2]{ChoHuh}; and Abreu and Nigro proved
$e$-positivity and $e$-unimodality for every abelian Hessenberg function
\cite[Proposition~3.6]{AbreuNigro}.  Colmenarejo, Morales, and Panova gave an
elementary treatment of the relevant $q$-hit expansions and compared their
normalizations
\cite[Theorems~1.2 and~1.3 and Proposition~B.1]{ColmenarejoMoralesPanova}.
Related manifestly positive formulas for abelian elementary coefficients were
obtained independently by Lee and Soh
\cite[Theorem~23 and Corollary~25]{LeeSoh}.
Previously established log-concavity cases include paths
\cite[Proposition~5.2]{AbreuNigro}, the closed-form family of Cho and Huh
\cite[Theorem~4.2]{ChoHuh}, lollipop graphs
\cite[Proposition~4.4]{HuhNamYoo}, \cite[discussion following
Proposition~5.2]{AbreuNigro}, and graphs obtained by gluing cliques of sizes
$a$ and $b$ at one vertex (denoted $K_{a,b}$ in the cited papers)
\cite[Corollary~4.14]{TomSigned}, \cite[Example~4.3]{SaganTom}.  None of these
results establishes log-concavity for the full low-rank abelian class treated
here.
At $q=1$, Matherne, Morales, and Selover proved that every finite-variable
specialization of the ordinary chromatic symmetric function of an abelian
Dyck path is Lorentzian
\cite[Theorem~6.8]{MatherneMoralesSelover}; that multivariate statement is
distinct from log-concavity of each elementary coefficient as a polynomial
in $q$.

There are two complementary phenomena.  The universal log-concavity
assertion is false: a connected graph on $13$ vertices has an elementary
coefficient whose first three nonzero entries are $1,6,38$.  On the other
hand, log-concavity holds uniformly on a large class naturally singled out
by the abelian $q$-hit theory.  The purpose of this paper is to establish the
positive theorem, explain its algebraic mechanism, and contrast that failure
with the structured positive regime.

Let $h:[n]\to[n]$ be a Hessenberg function and let $\lambda$ be the partition
whose transpose has parts $(\lambda^t)_i=n-h(i)$, with zero parts omitted.
When $h$ is abelian, the complement of the associated natural unit interval
graph is Ferrers bipartite.  We call
\begin{equation}\label{eq:intro-rank}
  \rho(\lambda)=\min\{\lambda_1,\ell(\lambda)\}
\end{equation}
the \emph{complement-Ferrers rank}; set $\rho(\varnothing)=0$.  Our main
result is the following.

\begin{theorem}[Main theorem]\label{thm:main}
Let $h:[n]\to[n]$ be abelian, and let $\lambda$ be its complement-Ferrers
partition.  If $\rho(\lambda)\leq3$, then every nonzero elementary-basis
coefficient $c_\mu(q)$ in \eqref{eq:intro-expansion} is a nonnegative
log-concave polynomial in $q$ with interval support.
\end{theorem}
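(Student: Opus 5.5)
The plan is to use the abelian $q$-hit expansion of Abreu--Nigro and Colmenarejo--Morales--Panova. For abelian $h$ with complement-Ferrers partition $\lambda$, every elementary coefficient $c_\mu(q)$ is, up to a power of $q$, a positive multiple of products of $q$-integers times a $q$-hit number $H_k(\lambda;q)$ of the Ferrers board $\lambda$. In fact only $\mu$ with at most two parts exceeding one can contribute, since the complement graph is bipartite. Because $\rho$ is invariant under transposition and $q$-hit numbers of $\lambda$ and $\lambda^t$ agree up to normalization, we may assume $\ell(\lambda)\le 3$. A $q$-hit number of a board with at most three rows is then a sum over placements of at most three rooks.

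The first step is to write $c_\mu(q)$ explicitly. Using the Garsia--Remmel rook/hit relation, $H_k$ is expressed via $q$-rook numbers $R_j(\lambda;q)=\prod$-type formulas. For $\le 3$ rows this gives
\[
c_\mu(q)=q^{a}\bigl([x_1][x_2][x_3][x_4]-q^{s}[y_1][y_2][y_3][y_4]\bigr)
\]
with parameters determined by $\lambda$ and $\mu$. The exponents depend polynomially (cubically) on the rook count. We interpolate in the number $k$ of hits, which is why the reduction is ``cubic interpolation''. For rank $\le 2$ the same computation collapses to a single product of $q$-integers, or a difference of products of two or three $q$-integers. The lower-rank cases then follow from the facts that products of $q$-integers are log-concave with no internal zeros, and that simple differences can be handled by direct telescoping.

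The second step is the uniform inequality. Let $D(q)$ be the difference of two products of four $q$-integers appearing above, under the constraints on parameters coming from the board, such as $y_i\le x_i$ after sorting and $s$ matching the degree shift. We must show that $D$ has nonnegative log-concave coefficients with interval support. Nonnegativity and interval support come from a positive decomposition: write $D$ as a sum of products $q^{b}[u_1][u_2][u_3]\cdot(\text{stuff})$ using $[x]-q^{t}[y]=[x-y]$-type identities. Positivity is already guaranteed by Hikita's real-$q$ result combined with this decomposition. For log-concavity, the coefficient sequence of $D$ is an explicit piecewise-polynomial function of the index $j$, piecewise cubic, with breakpoints at the partial sums of the $x_i$ and $y_i$. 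Write $d_j$ for these coefficients. On each window between breakpoints, $d_j^2-d_{j-1}d_{j+1}$ is a polynomial in $j$ and the parameters. We would verify its nonnegativity by grouping it into manifestly positive terms, using an interval or case analysis on the ordering of breakpoints.

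The main obstacle will be the indices near breakpoints, especially the initial prefix where the subtracted product begins to act. There the smooth cubic formula changes and naive bounds fail. I would first try a ``finite-window smoothing'': replace $d_j$ near a breakpoint by the neighbouring polynomial plus a bounded correction, and check the finitely many resulting inequalities. These would be handled uniformly in the large parameters by monotonicity in each $x_i$, with small parameter values verified directly. By palindromicity it suffices to treat the first half of the support. Finally, assembling the rank $0,1,2,3$ cases over all $\mu$ gives the theorem.
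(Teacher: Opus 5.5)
Your plan has a genuine gap in its central step. The polynomial you propose to prove log-concave is the wrong object, and for the natural candidate the claim is false.

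The cubic interpolation here is in the specialization variable $\xi$ of the Abreu--Nigro map, evaluated at $\xi=0,1,[N+1]_q,[N+2]_q$; it is not an interpolation in the number of hits. In the width-three normal form $h=(u,v,w,n,\ldots,n)$, put $N=n-4$, $A=u-1$, $B=v-2$, $C=w-3$, $S=A+B+C$. The interpolation gives $c_{(n-1,1)}=q[N-1]_q![N+2]_q\,\mathcal F_{N;A,B,C}$ and $c_{(n-2,2)}=q^{S-N-2}[2]_q[N-2]_q![N]_q\,\mathcal F_{N-1;N+2-A,N+2-B,N+2-C}$. So $c_\mu$ is not $q^a$ times a difference of two four-fold products. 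The four-fold difference $\mathcal F_{M;x,y,z}=[M]_q[x]_q[y]_q[z]_q-[M+3]_q[x-1]_q[y-1]_q[z-1]_q$ is only a kernel, multiplied by a long $q$-integer.

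The kernel alone need not be log-concave. Take the abelian rank-three function $h=(3,8,9,10,\ldots,10)$ with $n=10$, so $N=6$ and $(A,B,C)=(2,6,6)$. Then $\mathcal F_{6;2,6,6}=q+3q^2+6q^3+10q^4+17q^5+\cdots$, and $10^2<6\cdot17$. Your second step, a windowwise verification of $d_j^2\ge d_{j-1}d_{j+1}$ for this difference, is therefore an attempt to prove a false inequality. No breakpoint smoothing, monotonicity argument, or small-case check can rescue it.

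The same problem already occurs at rank two. Telescoping gives $[N]_q[A]_q[B]_q-[N+2]_q[A-1]_q[B-1]_q=q^{A-1}[B]_q[N-A+1]_q+q^{B-1}[A-1]_q[N-B+2]_q$, which proves positivity but not log-concavity. For $h=(3,7,13,\ldots,13)$, where $N=10$, $A=2$, $B=5$, this kernel begins $q+2q^2+3q^3+5q^4+\cdots$ and fails at the third coefficient. So ``direct telescoping'' does not finish the lower-rank cases either.

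The missing idea is that the accompanying factor $[L]_q$ must be kept. One must prove that $[L]_q\mathcal F_{M;x,y,z}$ is log-concave for every $L\ge M+1$. Rank two reduces to this via $\mathcal F_{N-1;A,B,N}=[N-1]_q\bigl([N]_q[A]_q[B]_q-[N+2]_q[A-1]_q[B-1]_q\bigr)$ with $L=N+1$. The paper establishes it in four pieces:
\begin{enumerate}
\item a telescoped decomposition of $\mathcal F$ into three products of $q$-integers with a common symmetry center, which makes the normalized sequence $f$ palindromic and unimodal;
\item a Clebsch--Gordan interval argument showing that the left-half difference sequence of $f$ is unimodal with first mode $p\le M$;
\item log-concavity of the cumulative prefix sums of $f$, proved via six signed walls, an exact moment identity, and an odd-interval inequality;
\item a finite-window smoothing lemma showing that these properties, together with $p<L$, make $[L]_qf$ log-concave.
\end{enumerate}
Your ``finite-window smoothing'', a local correction of one sequence near its breakpoints, is not a substitute for that lemma.

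Two smaller corrections. Hikita's result gives $c_\mu(q)\ge0$ only for real $q>0$, which does not imply coefficientwise nonnegativity. The abelian support is $\mu=(n-j,j)$ with $j\le\rho(\lambda)$, i.e.\ at most two parts, not ``at most two parts exceeding one''.
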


Equivalently, \cref{thm:main} covers every abelian complement-Ferrers diagram
having at most three rows or at most three columns, in every order and for
every elementary coefficient simultaneously.  The conclusion is stronger
than coefficientwise $e$-positivity and $e$-unimodality, and identifies
complement-Ferrers rank four as the first structural layer not covered by the
theorem.

The proof reduces the genuine rank-three case to one algebraic kernel.  For
$[m]_q=1+q+\cdots+q^{m-1}$, set
\begin{equation}\label{eq:intro-quartic}
  \F_{M;x,y,z}(q)
  =[M]_q[x]_q[y]_q[z]_q
   -[M+3]_q[x-1]_q[y-1]_q[z-1]_q.
\end{equation}
Under the hypotheses of \cref{thm:uniform-quartic}, we prove that
$[L]_q\F_{M;x,y,z}$ is nonnegative and log-concave.  Three ideas enter.
First, a telescoping identity expresses the normalized kernel
as a sum of three products of $q$-integers with a common symmetry center.
Second, a Clebsch--Gordan interval decomposition proves one-crossing control
for the primitive.  Third, cumulative prefixes are handled by six signed
walls, reduced through an odd-interval moment inequality.  A finite-window
smoothing lemma then passes from prefixes to multiplication by every
long-enough $q$-integer.

\ifjournalversion
The following counterexample was first proved in the four-page version~1 of
the author's preprint \cite{KafidovCounterexample}; it disproves the
unrestricted conjecture.
\else
The following counterexample was first proved in version~1 of this article;
it disproves the unrestricted conjecture.
\fi

\begin{theorem}[Counterexample]\label{thm:intro-counterexample}
For
\[
  h=(2,4,4,6,7,10,10,10,10,12,12,13,13)
\]
and $G=G_h$, one has
\begin{align*}
  \coeff{e_{(6,5,1,1)}}{X_G(\xx;q)}
  ={}&q^5+6q^6+38q^7+128q^8+257q^9+362q^{10}+400q^{11}\\
     &+362q^{12}+257q^{13}+128q^{14}+38q^{15}+6q^{16}+q^{17}.
\end{align*}
Consequently, $X_G(\xx;q)$ is not coefficientwise $e$-log-concave.
\end{theorem}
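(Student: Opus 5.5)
The statement is a finite, explicit claim about one graph, so I will prove it by direct computation of the single coefficient and then read off the failure of log-concavity. First I write down $G=G_h$ for $h=(2,4,4,6,7,10,10,10,10,12,12,13,13)$: vertices $1,\dots,13$, with $i<j$ adjacent exactly when $j\le h(i)$. I then check that $G$ is connected, i.e.\ $h(i)>i$ for every $i<13$, which is immediate from the list. To compute $\coeff{e_{(6,5,1,1)}}{X_G(\xx;q)}$ I will not expand in monomials, since that costs far too much. Instead I will use a signed or positive combinatorial formula for elementary coefficients of natural unit interval graphs that is valid for arbitrary (non-abelian) Hessenberg functions.

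The most robust choice is the Shareshian--Wachs $q$-analogue of Stanley's sink theorem. For the orientation by $i\to j$ when $i<j$, summing over acyclic orientations with exactly $k$ sinks, weighted by $q^{\asc}$, gives $\sum_{\ell(\mu)=k}c_\mu(q)$. That only yields row sums, so for an individual $\mu$ I would instead use the $e$-expansion of $X_G$ obtained by applying $\omega$ and writing the power-sum or Schur expansion. Concretely, I would use the Shareshian--Wachs formula
\[
\omega X_G(\xx;q)=\sum_{\mu}\Bigl(\sum_{T} q^{\mathrm{inv}_G(T)}\Bigr)s_\mu,
\]
which sums over $P$-tableaux, and then convert from $s_\mu$ to $e_\mu$ coefficients with the inverse Kostka matrix. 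This is a finite computation over partitions of $13$.

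An alternative that is easier to certify is to compute $X_G(\xx;q)$ restricted to $13$ variables. I would use the modular-law recursion of Guay-Paquet/Abreu--Nigro, which expresses $X_{G_h}$ for any Hessenberg $h$ as a $\mathbb{Q}(q)$-combination of $X$ for abelian or smaller functions. The base cases are products of $q$-hit numbers. This is the route I would actually implement, because every step is an exact polynomial identity and the base cases are already covered by the earlier preliminaries. I would cross-check the result in two ways: the sum of all $c_\mu(1)$ weighted appropriately must reproduce the ordinary chromatic polynomial data, and the coefficient must be palindromic about $q^{11}$, since the total degree of $X_G$ equals $|E(G)|$.

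Once the displayed polynomial is confirmed, the conclusion is elementary. The nonzero coefficients begin $1,6,38$, and $6^2=36<38=1\cdot 38$, so $a_6^2<a_5a_7$ and the sequence is not log-concave. It nevertheless remains positive and unimodal, consistent with the Shareshian--Wachs conjecture. The main obstacle is the verification step itself. I must ensure the computation is exact and that the $e$-coefficient normalization is the one in \eqref{eq:intro-expansion}, with no $\omega$ twist and no $q$-shift. I would guard against this by computing with two independent methods, the modular-law recursion and the direct $P$-tableau/inverse-Kostka method, and by requiring the two answers to agree.
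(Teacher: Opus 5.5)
Your plan is correct in outline and has the same logical shape as the paper's proof: certify the one coefficient by an exact finite computation, then observe that it is positive on the interval $[5,17]$ with $a_6^2=36<38=a_5a_7$. The computational route is genuinely different, though.

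The paper works only from the definition of $X_G$:
\begin{enumerate}
\item A fixed-content recurrence removes the independent set receiving the least remaining color, with weight $q^{\operatorname{cr}_{G_h}(I,W\setminus I)}$. This gives the monomial coefficient for each of the $4096$ compositions of $13$.
\item Symmetry is verified directly rather than assumed.
\item The monomial-to-elementary transition matrix, whose entries count zero-one matrices, is inverted over $\mathbb Q$.
\end{enumerate}

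You instead import structural theorems. One option is the Shareshian--Wachs $P$-tableau Schur expansion followed by an inverse-Kostka change of basis. The other is the modular law, reducing to abelian functions with $q$-hit base cases.

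The paper's certificate depends on nothing beyond the definition of $X_G$ and finite linear algebra, so no mismatch of conventions can enter; the paper itself warns that $q$-hit normalizations differ by powers of $q$. Your routes are more structured and possibly cheaper. Running two of them against each other, together with the palindromicity test about $q^{11}$ (indeed $|E(G)|=22$), is a reasonable safeguard.

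Your reason for avoiding monomials is overstated. Memoized over subsets of $[13]$, the recurrence plus a $101\times101$ rational solve is a small computation.

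One convention in your sketch needs correcting. With the standard Gasharov/Shareshian--Wachs definition of $P$-tableaux, rows are $P$-chains, i.e.\ color classes of $G$. Under that definition the expansion is $X_G(\xx;q)=\sum_T q^{\mathrm{inv}_G(T)}s_{\lambda(T)}$, with no $\omega$. For a single edge, $X_G=(1+q)e_2=(1+q)s_{(1,1)}$, and both $P$-tableaux have shape $(1,1)$.

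Taken literally, your display would give you the $e$-coefficients of $\omega X_G$, that is, the $h$-coefficients of $X_G$. This is exactly the trap you flag, so state the convention explicitly. Also note that $e_\mu=\sum_\lambda K_{\lambda\mu}s_{\lambda'}$, so the inverse Kostka matrix must be applied with this transpose.
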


The coefficient in \cref{thm:intro-counterexample} remains positive,
palindromic, and unimodal, but $6^2<1\cdot38$.  Thus the example separates
log-concavity from the weaker shape properties.  It is nonabelian, so it does
not conflict with \cref{thm:main}.

\medskip\noindent\textbf{Novelty and scope.}
To the best of the author's knowledge, no earlier theorem proves
coefficientwise $e$-log-concavity in every order for all abelian
complement-Ferrers diagrams having at most three rows or at most three
columns.  Sagan and Tom proved coefficientwise nonnegativity in the
$q$-graded setting when $\mu_1\leq2$, and in the ungraded setting when
$\mu_1\leq3$ \cite[Corollaries~2.5 and~2.6]{SaganTom}.  These hypotheses
constrain the partition indexing a coefficient, whereas ours constrains the
graph's complement-Ferrers shape; the quantifiers are different.

\medskip\noindent\textbf{Organization.}
\Cref{sec:preliminaries}--\cref{sec:lower-rank} prove the positive theorem.
The remaining sections establish the counterexample, discuss implications,
and give its exact certificate.

%% file: sections/preliminaries.tex
\section{Preliminaries}\label{sec:preliminaries}

\subsection{Hessenberg graphs and chromatic quasisymmetric functions}

Write $[n]=\{1,\ldots,n\}$ and $\N=\{1,2,\ldots\}$.  A
\emph{Hessenberg function} is a weakly increasing map $h:[n]\to[n]$
satisfying $h(i)\geq i$.  Its natural unit interval graph $G_h$ has vertex
set $[n]$ and an edge $\{i,j\}$, for $i<j$, exactly when $j\leq h(i)$.
For a proper coloring $\kappa:[n]\to\N$, put
\[
  \asc_{G_h}(\kappa)
  =\#\{\{i,j\}\in E(G_h):i<j\text{ and }\kappa(i)<\kappa(j)\}.
\]
The chromatic quasisymmetric function is
\begin{equation}\label{eq:xg-definition}
  X_{G_h}(\xx;q)
  =\sum_{\kappa\ \mathrm{proper}}
    q^{\asc_{G_h}(\kappa)}x_{\kappa(1)}\cdots x_{\kappa(n)}.
\end{equation}
For natural unit interval graphs it is symmetric in $\xx$
\cite[Theorem~4.5]{ShareshianWachs}; hence
\begin{equation}\label{eq:e-expansion}
  X_{G_h}(\xx;q)
  =\sum_{\mu\vdash n}c_\mu(q)e_\mu(\xx).
\end{equation}
For any basis element or monomial $b$, the notation $[b]F$ means the
coefficient of $b$ in $F$.

A finite sequence is \emph{unimodal} if it is weakly increasing up to some
index and weakly decreasing thereafter.  Its \emph{first mode} is the least
index at which its maximum is attained.

\begin{definition}\label{def:log-concavity}
A nonzero polynomial $a(q)=\sum_{k=r}^s a_kq^k$ is \emph{log-concave with
interval support} if $a_k>0$ for $r\leq k\leq s$ and
\[
  a_k^2\geq a_{k-1}a_{k+1}
  \qquad(r<k<s).
\]
We call $X_{G_h}(\xx;q)$ \emph{coefficientwise $e$-log-concave} if every
nonzero $c_\mu(q)$ in \eqref{eq:e-expansion} has this property.
\end{definition}

Zero elementary coefficients are ignored.  This convention avoids assigning
a support to the zero polynomial.

\subsection{Abelian functions and complement-Ferrers rank}

The complete Hessenberg function $(n,\ldots,n)$ is abelian by the empty-ideal
convention.  For a noncomplete Hessenberg function, abelianness is equivalent
to
\begin{equation}\label{eq:abelian-criterion}
  h(h(1)+1)=n
\end{equation}
\cite[Remark~2.10]{AbreuNigro}.  Associate to $h$ the partition $\lambda$
defined, after zero parts are omitted, by
\begin{equation}\label{eq:associated-partition}
  (\lambda^t)_i=n-h(i).
\end{equation}
For a noncomplete $h$, let $k=\min\{i:h(i)=n\}$.  Then
$\lambda_1=k-1$ and $\ell(\lambda)=n-h(1)$.  Consequently,
\begin{equation}\label{eq:abelian-ferrers-criterion}
  h(h(1)+1)=n
  \quad\Longleftrightarrow\quad
  \lambda_1+\ell(\lambda)\leq n.
\end{equation}
Indeed, the left side is equivalent to $k\leq h(1)+1$.  For the complete
function, $\lambda=\varnothing$ and preservation of abelianness under
transposition is immediate.  Thus transposing the complement-Ferrers
partition preserves abelianness.
This is the transpose of the partition convention used in parts of
\cite{AbreuNigro}; all formulas below use \eqref{eq:associated-partition}.
Equivalently, the nontrivial part of the graph complement is Ferrers
bipartite; this is compatible with the two-clique characterization of
abelian Hessenberg graphs in \cite[p.~1061]{HaradaPrecup}.  We use the rank parameter
\begin{equation}\label{eq:ferrers-rank}
  \rho(\lambda)=\min\{\lambda_1,\ell(\lambda)\}.
\end{equation}
For the empty partition, set $\lambda_1=\ell(\lambda)=\rho(\lambda)=0$.

Transposing the Ferrers diagram produces the Hessenberg function
\begin{equation}\label{eq:transpose-h}
  h^t(i)=n+1-\min\{j:h(j)\geq n+1-i\}.
\end{equation}
The transpose symmetry is exact at the same value of $q$:
\begin{equation}\label{eq:transpose-symmetry}
  X_{G_{h^t}}(\xx;q)=X_{G_h}(\xx;q)
\end{equation}
\cite[Theorem~1.2 and Proposition~2.14]{AbreuNigro}.  Thus width and length
of $\lambda$ may be exchanged without changing any elementary coefficient
polynomial.

\subsection{\texorpdfstring{$q$}{q}-integers and the abelian specialization}

We use
\[
  [m]_q=1+q+\cdots+q^{m-1},
  \qquad
  [m]_q!=\prod_{i=1}^m[i]_q,
\]
with $[0]_q=0$ and $[0]_q!=1$.  Define
\begin{equation}\label{eq:phi-definition}
  \varphi_m(\xi)=\prod_{i=0}^{m-1}(\xi-[i]_q).
\end{equation}
The following is the exact form of the abelian specialization needed later.

\begin{proposition}[Abreu--Nigro specialization]
\label{prop:abelian-specialization}
There is a $\mathbb Q(q)$-algebra homomorphism from the ring of symmetric
functions $\Lambda_{\mathbb Q(q)}$ to $\mathbb Q(q)[\xi]$, denoted $\alpha$,
satisfying
\begin{equation}\label{eq:alpha-em}
  \alpha(e_m)=\frac{\varphi_m(\xi)}{[m]_q!}
\end{equation}
and
\begin{equation}\label{eq:alpha-xg}
  \alpha\!\left(X_{G_h}(\xx;q)\right)
  =\prod_{i=1}^n\bigl(\xi-[h(i)-i]_q\bigr).
\end{equation}
\end{proposition}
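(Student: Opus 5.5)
The plan is to realize $\alpha$ as the principal specialization $x_c\mapsto q^{c-1}$ in $N$ variables, and then to interpolate in $\xi=[N]_q$.

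\emph{Step 1: defining $\alpha$.} Since $e_1,e_2,\ldots$ are algebraically independent generators of $\Lambda_{\mathbb Q(q)}$, the assignment \eqref{eq:alpha-em} extends uniquely to a $\mathbb Q(q)$-algebra homomorphism $\alpha:\Lambda_{\mathbb Q(q)}\to\mathbb Q(q)[\xi]$. So existence is automatic, and the only real content is \eqref{eq:alpha-xg}.

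\emph{Step 2: matching $\alpha$ with principal specialization.} For $N\in\N$, let $\mathrm{ps}_N$ denote the specialization $x_c\mapsto q^{c-1}$ for $c\le N$ and $x_c\mapsto 0$ otherwise. The identity $[N]_q-[i]_q=q^i[N-i]_q$ gives
\[
  \varphi_m([N]_q)=q^{\binom m2}[N]_q[N-1]_q\cdots[N-m+1]_q .
\]
Dividing by $[m]_q!$ yields $q^{\binom m2}\binom{N}{m}_q=\mathrm{ps}_N(e_m)$. Hence $\alpha(f)|_{\xi=[N]_q}=\mathrm{ps}_N(f)$ for every $f$: both sides are ring homomorphisms that agree on generators. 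The values $[N]_q$ for $N\in\N$ are infinitely many distinct elements of $\mathbb Q(q)$. So a polynomial in $\xi$ is determined by its values there, and it suffices to prove
\[
  \mathrm{ps}_N X_{G_h}=\prod_{i=1}^n\bigl([N]_q-[h(i)-i]_q\bigr)\qquad\text{for all }N.
\]

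\emph{Step 3: counting colorings.} By \eqref{eq:xg-definition},
\[
  \mathrm{ps}_N X_{G_h}=\sum_\kappa q^{\asc(\kappa)+\sum_i(\kappa(i)-1)},
\]
where $\kappa$ ranges over proper colorings with colors in $[N]$. Color the vertices in the order $i=n,n-1,\ldots,1$. When vertex $i$ is reached, its later neighbors are $i+1,\ldots,h(i)$. These form a clique, because $h(j)\ge h(i)$ for $j>i$, so they carry a set $S$ of $m=h(i)-i$ distinct colors. Vertex $i$ contributes the factor $q^{c-1}$ for its color $c\notin S$. It also contributes $q^{\#\{s\in S:s>c\}}$, which records exactly the ascents on the edges $\{i,j\}$ with $j>i$; every edge is counted once, at its smaller endpoint.

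The key observation is that this exponent is independent of $S$ up to relabeling. Writing $c-1=\#\{\text{available colors}<c\}+\#\{s\in S:s<c\}$, the exponent becomes $\#\{\text{available}<c\}+m$. As $c$ runs over the $N-m$ available colors, the available-rank runs over $0,\ldots,N-m-1$. The contribution of vertex $i$ is therefore
\[
  q^m[N-m]_q=[N]_q-[m]_q ,
\]
regardless of the earlier choices (and it is $0$ when $m\ge N$, which is consistent). Multiplying over $i$ gives the product formula.

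The main point requiring care is this last constant-exponent bookkeeping, namely the choice of processing order and ascent convention so that the $S$-dependence cancels. The rest is formal.
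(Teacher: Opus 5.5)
Your proof is correct. It also does more than the paper, which gives no argument of its own: it uses only the two displayed identities and defers to the proof of Proposition~3.3 in Abreu--Nigro.

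Your route is self-contained:
\begin{enumerate}
\item Define $\alpha$ on the free generators $e_m$.
\item Use $\varphi_m([N]_q)/[m]_q!=q^{\binom m2}\binom{N}{m}_q$ to identify $\alpha$ followed by $\xi\mapsto[N]_q$ with the principal specialization $x_c\mapsto q^{c-1}$, and then interpolate in $\xi$.
\item Count proper colorings along the elimination order $n,n-1,\dots,1$. The later neighbours $\{i+1,\dots,h(i)\}$ form a clique, and the $S$-dependence of the exponent cancels.
\end{enumerate}

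This buys two things that the citation leaves implicit. First, it checks directly that the paper's ascent convention produces exactly $\prod_i(\xi-[h(i)-i]_q)$ with no stray power of $q$. That matters because the paper itself warns that $q$-hit normalizations can differ by powers of $q$. Second, it proves the identity for every Hessenberg function, not only abelian ones, which is the generality in which the proposition is stated.

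One small correction. Your parenthetical says the vertex-$i$ factor is $0$ when $m\geq N$, but that is true only for $m=N$. For $m>N$ one has $[N]_q-[m]_q=-q^N[m-N]_q\neq0$, and $q^m[N-m]_q$ is not even defined.

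The argument still goes through. When $m>N$, the set $\{i+1,\dots,h(i)\}$ is a clique of size $m>N$, so there is no proper $N$-coloring of the later vertices. The recursion $Z_i=([N]_q-[m_i]_q)Z_{i+1}$ then holds as $0=0$. More simply, you can restrict to $N\geq n$. That still gives infinitely many distinct interpolation points $[N]_q$, and it makes $m=h(i)-i\leq n-1<N$ automatic.
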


\begin{proposition}[Abelian support]\label{prop:abelian-support}
If $h$ is abelian, then $c_\mu(q)=0$ unless
\[
  \mu=(n-j,j),\qquad 0\leq j\leq\rho(\lambda).
\]
Here and below, the zero part is omitted when $j=0$.
\end{proposition}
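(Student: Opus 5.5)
The plan is to read off the support from the monomial expansion, using unitriangularity between $\{e_\mu\}$ and $\{m_\nu\}$, and to control the monomial side through the structure of the complement of $G_h$. Recall that
\[
  e_\mu=m_{\mu^t}+\sum_{\nu<\mu^t}M_{\mu\nu}m_\nu ,
\]
where $<$ is dominance order. Also, by \eqref{eq:xg-definition}, the coefficient of $m_\nu$ in $X_{G_h}(\xx;q)$ is a polynomial in $q$ with nonnegative coefficients. It is nonzero if and only if $[n]$ can be partitioned into independent sets of $G_h$ of sizes $\nu_1,\nu_2,\ldots$.

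\textbf{Step 1: the complement is bipartite between two small sides.} Assume $h$ is noncomplete; for the complete function only $\mu=(n)$ occurs, since $X=[n]_q!\,e_n$. Let $k=\min\{i:h(i)=n\}$. A nonedge $\{i,j\}$ with $i<j$ means $j>h(i)$, which forces $h(i)<n$, so $i<k$, and $j>h(1)$. By \eqref{eq:abelian-criterion}, equivalently $k\le h(1)+1$, these two index sets $\{1,\ldots,k-1\}$ and $\{h(1)+1,\ldots,n\}$ are disjoint. Their sizes are $\lambda_1$ and $\ell(\lambda)$, respectively. Hence every nonedge of $G_h$ joins a vertex of a set $A$ with $|A|=\lambda_1$ to a vertex of a disjoint set $B$ with $|B|=\ell(\lambda)$. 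This has two consequences.
\begin{itemize}
\item There is no triangle of nonedges, so every independent set of $G_h$ has size at most $2$.
\item Any family of pairwise disjoint nonedges has at most $\min\{|A|,|B|\}=\rho(\lambda)$ members.
\end{itemize}
So $[m_\nu]X_{G_h}\ne0$ forces $\nu=(2^j,1^{n-2j})$ with $j\le\rho(\lambda)$.

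\textbf{Step 2: triangular extraction.} Suppose some $c_\mu\neq0$ with $\ell(\mu)\ge3$. Among the nonzero $c_\mu$, choose one with $\ell(\mu)$ maximal, and among those, one with $\mu^t$ maximal in dominance. Some other $e_\nu$ with $c_\nu\ne0$ contributes to $m_{\mu^t}$ only if $\mu^t\le\nu^t$. Then $\ell(\nu)=\nu^t_1\ge\mu^t_1=\ell(\mu)$, so $\ell(\nu)=\ell(\mu)$ by the first choice, and $\nu^t=\mu^t$ by the second. Hence $[m_{\mu^t}]X=c_\mu\ne0$ with $\mu^t_1\ge3$, contradicting Step 1. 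So every nonzero $c_\mu$ has $\mu=(n-j,j)$.

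Now take the nonzero $c_{(n-j,j)}$ with $j$ maximal. The transpose $(n-k,k)^t=(2^k,1^{n-2k})$ dominates $(2^j,1^{n-2j})$ exactly when $k\ge j$. So the same argument gives $[m_{(2^j,1^{n-2j})}]X=c_{(n-j,j)}\ne0$, and Step 1 yields $j\le\rho(\lambda)$. Every smaller $j$ with nonzero coefficient then obeys the bound as well.

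The only delicate point is ensuring that the triangularity argument is sign-free. Because we select an extremal $\mu$, no cancellation can occur; this is exactly why each dominance-maximal choice is made before comparing coefficients. The rest is bookkeeping on the sets $A$ and $B$.
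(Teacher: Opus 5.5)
Your proof is correct, and it takes a genuinely different route from the paper.

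The paper gives no internal argument for this proposition. It imports the support from the Abreu--Nigro abelian $q$-hit expansion \cite[Theorems~1.3 and~4.3]{AbreuNigro}. There each $c_{(n-j,j)}$ is expressed through a $q$-hit number of the board $\lambda$, and these vanish once $j$ exceeds the number of nonattacking rooks that fit in $\lambda$, which is at most $\min\{\lambda_1,\ell(\lambda)\}$. That route also yields explicit, manifestly positive coefficients, but it depends on an external normalization.

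You instead use only three ingredients:
\begin{itemize}
\item the symmetry of $X_{G_h}$;
\item the observation that abelianness ($k\le h(1)+1$) makes the complement of $G_h$ bipartite between $\{1,\dots,\lambda_1\}$ and $\{h(1)+1,\dots,n\}$;
\item the unitriangularity $e_\mu=m_{\mu^t}+(\text{dominance-lower terms})$.
\end{itemize}
Your extremal choices correctly make the extraction independent of the unknown signs of the $c_\mu$. You obtain only the support, not the coefficients. That is all the paper needs, since it recomputes the surviving coefficients by interpolation in \cref{prop:rank-three-coefficients}.

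Two small remarks. First, $\{(2^j,1^{n-2j}):j\le\rho(\lambda)\}$ is a lower ideal for dominance, so your two extremal steps can be merged into a single span-and-dimension argument. Second, your Step~1 actually proves a sharper bound: $j$ is at most the matching number of the complement of $G_h$, which can be smaller than $\rho(\lambda)$. For instance, $h=(3,5,5,6,6,6)$ has $\lambda=(3,1,1)$ and $\rho(\lambda)=3$, but its complement has matching number $2$. This agrees with the vanishing factor $[N+1-B]_q=[0]_q$ in the paper's formula for $c_3$.
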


The specialization and product formula are established in the proof of
\cite[Proposition~3.3]{AbreuNigro}; the support statement is the abelian
$q$-hit expansion in the normalization of
\cite[Theorems~1.3 and~4.3]{AbreuNigro}.  We verify the restricted linear
independence needed later directly through a triangular evaluation.
The underlying $q$-hit theory originates in the $q$-rook framework of
Garsia and Remmel \cite{GarsiaRemmel}; see also Haglund's symmetry and
unimodality results \cite[Theorem~6]{HaglundQRook}.  Since common $q$-hit conventions
can differ by powers of $q$, we use the displayed normalization throughout.

\subsection{The uniform quartic theorem}

The analytic core is a uniform statement about \eqref{eq:intro-quartic}.

\begin{theorem}[Uniform quartic theorem]\label{thm:uniform-quartic}
Let $M,x,y,z$ be integers satisfying
\[
  M\geq1,
  \qquad 1\leq x\leq y\leq z\leq M+2,
  \qquad (x,y,z)\neq(M+2,M+2,M+2).
\]
Then, for every integer $L\geq M+1$, the polynomial
\[
  [L]_q\F_{M;x,y,z}(q)
\]
is nonzero and has a nonnegative, log-concave coefficient sequence with
interval support.
\end{theorem}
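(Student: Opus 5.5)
We follow the route announced in the introduction: first find a positive, symmetric description of the kernel, then analyse it. Write $[m]=[m]_q$.

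\emph{Step 1: positive decomposition.} We rewrite $\F_{M;x,y,z}$ by telescoping, replacing one factor at a time:
\begin{align*}
\F_{M;x,y,z}
={}&\bigl([M][x]-[M+1][x-1]\bigr)[y][z]\\
&+[M+1][x-1]\bigl([y][z]-[y-1][z-1]\bigr)\\
&+[x-1][y-1][z-1]\bigl([M+1]-[M+3]\bigr).
\end{align*}
This is not yet positive, because the last term is negative. We will therefore use the elementary identities
\[
[a][b]-[a+1][b-1]=q^{b-1}[a-b+1]
\]
(for $a\ge b-1$) and $[y][z]-[y-1][z-1]=[y+z-1]$, and regroup until $\F$ equals $q^{s}$ times a sum of three products of $q$-integers, $P_1+P_2+P_3$. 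Each $P_i$ will be palindromic about a common center $c$. The constraints $x\le y\le z\le M+2$ and $(x,y,z)\ne(M+2,M+2,M+2)$ are exactly what should make every factor length nonnegative and at least one term nonzero; this gives nonvanishing. Nonnegativity and palindromicity about $c$ then follow, and multiplication by $[L]$ preserves both.

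\emph{Step 2: log-concavity through unimodality and crossings.} Each product of $q$-integers is log-concave with interval support. A sum of palindromic log-concave sequences with a common center, however, is only guaranteed to be unimodal. Our plan is to control the shape of the ratio sequence $r_k=a_k/a_{k-1}$ of the primitive $a=P_1+P_2+P_3$. For this we use a Clebsch--Gordan style expansion: every product $[u][v]$ is a sum of centered intervals $q^{j}[u+v-1-2j]$, so $a$ becomes a nonnegative combination of centered intervals $I_\ell$ of various odd or even lengths. We will show that the second difference $a_k^2-a_{k-1}a_{k+1}$ changes sign at most once on the left half; this is the ``one-crossing control''. Palindromicity then handles the right half.

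\emph{Step 3: multiplication by $[L]$, the main obstacle.} The coefficients of $[L]\cdot a$ are window sums $\sum_{j=k-L+1}^{k}a_j$. The primitive $a$ may itself fail log-concavity slightly near its ends; $L\ge M+1$ is the hypothesis that allows smoothing. For $k$ within distance $L$ of the left end, the coefficient is a cumulative prefix $S_k=\sum_{j\le k}a_j$. We will prove $S_k^2\ge S_{k-1}S_{k+1}$ by writing each prefix inequality as a signed combination of finitely many boundary terms (the ``walls'', which we expect to number six, one for each interval endpoint type in Step 2). Each wall will be reduced to a moment inequality for sums over odd-length intervals, proved by explicit polynomial inequalities in $M,x,y,z$ after a case split on parities.

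For the interior windows, where $k$ is at least $L$ from both ends, we apply a finite-window smoothing lemma: if $a$ is nonnegative, unimodal and palindromic with one-crossing ratios, and its window length exceeds half its support (guaranteed since $L\ge M+1$ and $\deg a\le 2M+O(1)$), then every window sum sequence is log-concave once the prefixes are. The right end follows from the left by symmetry.

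I expect the prefix-wall inequalities to be the hardest step. They are uniform in four parameters, and the interval and parity bookkeeping must be arranged so that each wall reduces to a manifestly nonnegative expression. I would first test these inequalities numerically for small $M$ to identify the right grouping.
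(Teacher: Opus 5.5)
\textbf{There is a genuine gap.} Your outline follows the paper's architecture, but the step that makes multiplication by $[L]_q$ work is misformulated. You assert that $L\ge M+1$ exceeds half the support because the degree is $2M+O(1)$. This is false. For $x\le M$ the normalized kernel $q^{1-x}\F_{M;x,y,z}$ has degree $M+y+z-x-2$, which reaches $3M$. For $x=2$, $y=z=M+2$ it equals $[M+2]_q^2[M-1]_q+q^M[M+1]_q$. At $M=3$ this is $(1,3,5,8,10,10,8,5,3,1)$, and its product with $[4]_q$ has the three moving-window coefficients $26,33,36$ on its left half.

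So a smoothing lemma that assumes $L>d/2$ does not apply, and genuine windows must be controlled. The property that controls them is not a sign pattern of $a_k^2-a_{k-1}a_{k+1}$; that is a log-concavity margin, not a second difference. What is needed is unimodality of the left-half first differences $\delta_j=f_j-f_{j-1}$, together with the location bound $p<L$ on their first mode. Write the window margin as $d_a^2-R_{a-1}e_a$ with $e_a=\delta_{a+L}-\delta_a$. The bound $p<L$ forces the indices with $e_a>0$ to form an initial segment on which $e_a$ decreases. Hence each margin is either nonnegative outright or propagates upward from the prefix-to-window transition. The Clebsch--Gordan interval count must therefore deliver $p\le M+2-x\le M$. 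Your plan never states such a bound, and nothing in it ties the shape of the kernel to $L$.

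Two further steps are missing. In Step~1 the identity $[y]_q[z]_q-[y-1]_q[z-1]_q=[y+z-1]_q$ is false: the left side is $q^{y-1}[z]_q+q^{z-1}[y-1]_q$ (try $y=z=2$). You also never reach a positive form. Telescope instead by raising the first factor while lowering $x$, $y$, $z$ one at a time. Then each step is a transfer $[u]_q[v]_q-[u+1]_q[v-1]_q=q^{v-1}[u-v+1]_q$, and one gets
\begin{align*}
q^{1-x}\F_{M;x,y,z}
={}&[M+1-x]_q[y]_q[z]_q+q^{y-x}[x-1]_q[M+2-y]_q[z]_q\\
&+q^{z-x}[x-1]_q[y-1]_q[M+3-z]_q.
\end{align*}

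For the prefixes, six fixed walls do not describe $P_k$. Multiplying this kernel by $1-q$ gives $U-q^{\tau}V$, with $\tau=M+1-x$ and $U,V$ sums of products of two $q$-integers. Hence $\sum_kP_kq^k=U/(1-q)^2-q^{\tau}V/(1-q)^2$, and the wall and odd-moment argument controls only the unreflected series. The paper reduces to margins with $k\le p-1$. Later margins follow from $\delta$-unimodality through $\mathcal Q_k=f_kf_{k+1}-P_k\delta_{k+1}$ and $\mathcal Q_{k+1}-\mathcal Q_k=f_{k+1}\delta_{k+1}-P_k(\delta_{k+2}-\delta_{k+1})$, where $\mathcal Q_k=P_k^2-P_{k-1}P_{k+1}$. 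It then adds a separate reflected boundary margin when $p=M+2-x$. This reduction again rests on the mode bound. Without it, your prefix plan cannot handle the prefixes once the reflected term enters at degree $\tau$.
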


%% file: sections/rank_three_reduction.tex
\section{The exact rank-three reduction}
\label{sec:rank-three-reduction}

We now reduce the genuine complement-Ferrers rank-three case of
\cref{thm:main} to \cref{thm:uniform-quartic}.  The coefficient extraction
uses the abelian support statement and the specialization from
\cref{prop:abelian-support,prop:abelian-specialization}; in particular, no
inversion formula for the full elementary basis is needed.

\subsection{The width-three orientation}

The width $\lambda_1$ of the partition in
\eqref{eq:associated-partition} is the number of entries of $h$ that are
strictly smaller than $n$.  Transposition exchanges $\lambda_1$ and
$\ell(\lambda)$ and preserves $X_{G_h}(\xx;q)$.  Hence a genuine rank-three
shape may be oriented so that $\lambda_1=3$.

\begin{proposition}[Rank-three normal form]
\label{prop:rank-three-domain}
After transposition if necessary, every abelian Hessenberg function of
complement-Ferrers rank three has the form
\begin{equation}\label{eq:rank-three-h}
  h=(u,v,w,n,\ldots,n),
  \qquad
  3\leq u\leq n-3,
  \qquad
  u\leq v\leq w\leq n-1.
\end{equation}
It is disconnected if and only if
\begin{equation}\label{eq:rank-three-disconnected-h}
  h=(3,3,3,n,\ldots,n).
\end{equation}
In the connected case, put
\begin{equation}\label{eq:rank-three-parameters}
  N=n-4,
  \qquad A=u-1,
  \qquad B=v-2,
  \qquad C=w-3,
  \qquad S=A+B+C.
\end{equation}
Then the exact parameter domain is
\begin{equation}\label{eq:rank-three-domain}
  \begin{gathered}
    N\geq2,
    \qquad 2\leq A\leq N,
    \qquad A-1\leq B\leq N+1,\\
    \max\{1,B-1\}\leq C\leq N.
  \end{gathered}
\end{equation}
Conversely, every quadruple in \eqref{eq:rank-three-domain} determines a
connected abelian Hessenberg function of the form
\eqref{eq:rank-three-h} and complement-Ferrers rank three.
\end{proposition}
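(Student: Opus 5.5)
The plan is to derive the normal form directly from \eqref{eq:associated-partition}, the Ferrers criterion \eqref{eq:abelian-ferrers-criterion}, and the transpose symmetry \eqref{eq:transpose-symmetry}. The domain \eqref{eq:rank-three-domain} then comes from rewriting the inequalities in the new variables. No analytic input is needed.

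\emph{Step 1 (orientation and the inequalities on $u$).} Suppose $\rho(\lambda)=3$. Transposition exchanges $\lambda_1$ and $\ell(\lambda)$, preserves abelianness, and preserves $X_{G_h}$. So we may assume $\lambda_1=3$ and $\ell(\lambda)\geq3$. Since $\lambda_1$ counts the entries of $h$ below $n$, and $h$ is weakly increasing, we get $h(1),h(2),h(3)\leq n-1$ and $h(4)=\cdots=h(n)=n$. Write $h=(u,v,w,n,\ldots,n)$. Monotonicity gives $u\leq v\leq w\leq n-1$. Next, $\ell(\lambda)=n-h(1)=n-u\geq3$ gives $u\leq n-3$. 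The abelian criterion $\lambda_1+\ell(\lambda)\leq n$ reads $3+n-u\leq n$, that is, $u\geq3$. The Hessenberg conditions $h(i)\geq i$ are then automatic. This yields \eqref{eq:rank-three-h}, and in particular $n\geq6$.

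\emph{Step 2 (connectedness).} I will use the standard fact that $G_h$ is connected if and only if $h(i)>i$ for all $i<n$. One direction: if $h(i)=i$ with $i<n$, there is no edge between $[i]$ and $\{i+1,\ldots,n\}$. The other: if $h(i)>i$ for all $i<n$, each $i$ is adjacent to $i+1$. For $4\leq i<n$ the condition holds because $h(i)=n$. Also $u\geq3>1$ and $v\geq u\geq3>2$. Hence the only possible failure is $w=3$, which forces $u=v=w=3$. This gives \eqref{eq:rank-three-disconnected-h}.

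\emph{Step 3 (translation to $(N,A,B,C)$ and the converse).} Substitute $u=A+1$, $v=B+2$, $w=C+3$, $n=N+4$ into the inequalities above.
\begin{itemize}
\item $3\leq u\leq n-3$ becomes $2\leq A\leq N$, which forces $N\geq2$.
\item $u\leq v\leq n-1$ becomes $A-1\leq B\leq N+1$.
\item $v\leq w\leq n-1$ becomes $B-1\leq C\leq N$.
\item Connectedness, $w\geq4$, becomes $C\geq1$.
\end{itemize}
This is exactly \eqref{eq:rank-three-domain}.

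For the converse, run the substitutions backwards. Any quadruple satisfying \eqref{eq:rank-three-domain} yields a weakly increasing $h$ with $h(i)\geq i$, so $h$ is Hessenberg. It has $\lambda_1=3$ because $w\leq n-1$, and $\ell(\lambda)=n-u\geq3$, so $\rho(\lambda)=3$. Abelianness follows from $u\geq3$ via \eqref{eq:abelian-ferrers-criterion}, and connectedness from $C\geq1$ by Step 2.

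The only point requiring care is the connectivity criterion in Step 2, and it is elementary. I expect no real obstacle beyond bookkeeping of the strict versus weak inequalities.
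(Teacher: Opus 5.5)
Your proposal is correct and follows essentially the same route as the paper. You orient so that $\lambda_1=3$, get $u\leq n-3$ from $\ell(\lambda)=n-u\geq3$ and $u\geq3$ from abelianness, use the cut criterion $h(i)=i$ to isolate $(3,3,3,n,\ldots,n)$, and substitute into the parameters; the only cosmetic difference is that you use the Ferrers form $\lambda_1+\ell(\lambda)\leq n$ of \eqref{eq:abelian-ferrers-criterion}, whereas the paper argues directly with $h(h(1)+1)=n$, and the two are equivalent by the preliminaries.
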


\begin{proof}
Width three gives exactly three entries of $h$ smaller than $n$, so
$h=(u,v,w,n,\ldots,n)$.  Since the rank is genuinely three, the height of
the complement-Ferrers diagram is at least three.  By
\eqref{eq:associated-partition}, this height is $n-h(1)=n-u$; hence
$u\leq n-3$.  The abelian criterion \eqref{eq:abelian-criterion} forces
$u\geq3$: if $u\leq2$, then the entry $h(u+1)$ would be one of the first
three, and hence would be smaller than $n$, contradicting
$h(u+1)=h(h(1)+1)=n$.  Monotonicity and the width-three assumption give the
remaining inequalities in \eqref{eq:rank-three-h}.

For a natural unit interval graph, a cut between $i$ and $i+1$ disconnects
the graph precisely when $h(i)=i$.  In \eqref{eq:rank-three-h}, the
inequalities $u\geq3$ and $v\geq u$ rule this out at $i=1,2$, while every
entry from the fourth onward equals $n$.  Thus disconnection occurs exactly
when $w=h(3)=3$, which by monotonicity forces $u=v=w=3$.

Suppose now that the graph is connected.  Then $w\geq4$.  The bounds in
\eqref{eq:rank-three-h}, after applying
\eqref{eq:rank-three-parameters}, become
\[
  2\leq A\leq N,
  \qquad A-1\leq B\leq N+1,
  \qquad 1\leq C\leq N,
  \qquad C\geq B-1,
\]
which is \eqref{eq:rank-three-domain}.  Conversely, given parameters in
that domain, set $u=A+1$, $v=B+2$, and $w=C+3$.  The displayed inequalities
give \eqref{eq:rank-three-h}, with $w\geq4$.  Since $u+1\geq4$, one has
$h(h(1)+1)=h(u+1)=n$, so $h$ is abelian.  Also $u\leq n-3$ gives height at
least three, while exactly three entries are less than $n$.  Thus the rank is
three, and the preceding connectivity criterion applies.
\end{proof}

\subsection{Cubic interpolation}

For a connected function in \cref{prop:rank-three-domain}, the multiset
of area values $h(i)-i$ is
\begin{equation}\label{eq:rank-three-area-multiset}
  \{A,B,C,N,N-1,\ldots,1,0\}.
\end{equation}
By the abelian $q$-hit support statement in
\cref{prop:abelian-support}, only the four elementary coefficients
\begin{equation}\label{eq:rank-three-cj-definition}
  c_j(q)=\coeff{e_{(n-j,j)}}{X_{G_h}(\xx;q)},
  \qquad 0\leq j\leq3,
\end{equation}
can be nonzero.

Apply \eqref{eq:alpha-xg} to \eqref{eq:rank-three-area-multiset} and divide
the resulting identity by the common factor
$\varphi_{n-3}(\xi)=\varphi_{N+1}(\xi)$.  Using \eqref{eq:alpha-em} gives
\begin{align}
&(\xi-[A]_q)(\xi-[B]_q)(\xi-[C]_q)\notag\\
={}&c_0\frac{\prod_{j=1}^3(\xi-[N+j]_q)}{[n]_q!}
 +c_1\frac{\xi(\xi-[N+1]_q)(\xi-[N+2]_q)}{[n-1]_q!}\notag\\
&+c_2\frac{\xi(\xi-1)(\xi-[N+1]_q)}{[n-2]_q![2]_q}
 +c_3\frac{\xi(\xi-1)(\xi-[2]_q)}{[n-3]_q![3]_q!}.
\label{eq:rank-three-cubic}
\end{align}
The evaluations
\begin{equation}\label{eq:rank-three-evaluation-points}
  \xi=0,
  \qquad \xi=1,
  \qquad \xi=[N+1]_q,
  \qquad \xi=[N+2]_q
\end{equation}
produce a block-triangular system in the solve order $(c_0,c_1,c_3,c_2)$.  In
particular, they directly verify the needed linear independence in this
four-dimensional subspace.

\begin{proposition}[Exact rank-three coefficients]
\label{prop:rank-three-coefficients}
For the parameters in \eqref{eq:rank-three-domain}, the coefficients in
\eqref{eq:rank-three-cj-definition} are
\begin{align}
  c_0={}&[A]_q[B]_q[C]_q[n]_q[N]_q!,
  \label{eq:rank-three-c0}\\
  c_1={}&q[N-1]_q![N+2]_q\F_{N;A,B,C},
  \label{eq:rank-three-c1}\\
  c_2={}&q^{S-N-2}[2]_q[N-2]_q![N]_q
  \F_{N-1;N+2-A,N+2-B,N+2-C},
  \label{eq:rank-three-c2}\\
  c_3={}&q^{S-3}[2]_q[3]_q[N-2]_q!
  [N+1-A]_q[N+1-B]_q[N+1-C]_q.
  \label{eq:rank-three-c3}
\end{align}
\end{proposition}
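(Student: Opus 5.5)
The plan is to evaluate the cubic identity \eqref{eq:rank-three-cubic} at the four points in \eqref{eq:rank-three-evaluation-points}, in the solve order $(c_0,c_1,c_3,c_2)$. Each evaluation kills all but one or two basis cubics on the right. First I will record the elementary values we need, each of which follows from $[a]_q-[b]_q=q^b[a-b]_q$ for $a\geq b$. Throughout, $n=N+4$. The left side is $P(\xi)=(\xi-[A]_q)(\xi-[B]_q)(\xi-[C]_q)$.

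\emph{Step 1: $\xi=0$.} Only the $c_0$ term survives. Its value is $-[N+1]_q[N+2]_q[N+3]_q/[n]_q!=-1/([n]_q[N]_q!)$, while $P(0)=-[A]_q[B]_q[C]_q$. This gives \eqref{eq:rank-three-c0} immediately.

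\emph{Step 2: $\xi=1$.} The $c_2$ and $c_3$ terms vanish. For the $c_1$ term we have $1-[N+j]_q=-q[N+j-1]_q$. For $c_0$ the product is $(-q)^3[N]_q[N+1]_q[N+2]_q$, and for $c_1$ it is $q^2[N]_q[N+1]_q$. The left side is $P(1)=-q^3[A-1]_q[B-1]_q[C-1]_q$. Substituting $c_0$ and clearing the factorials $[n]_q!=[N+3]_q[N+2]_q[N+1]_q[N]_q\cdots$ and $[n-1]_q!$, we solve for $c_1$. The result should be proportional to $[N]_q[A]_q[B]_q[C]_q-[N+3]_q[A-1]_q[B-1]_q[C-1]_q$, which is exactly $\F_{N;A,B,C}$. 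The remaining task is bookkeeping to extract the prefactor $q[N-1]_q![N+2]_q$.

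\emph{Step 3: $\xi=[N+1]_q$.} Every term containing the factor $(\xi-[N+1]_q)$ dies, leaving only $c_0$ and $c_3$. The $c_0$ term carries the factor $\prod_{j=2}^3([N+1]_q-[N+j]_q)$, which is zero at $j=1$, so $c_0$ drops out as well and only $c_3$ remains. Its cubic evaluates to $[N+1]_q\cdot q[N]_q\cdot q^2[N-1]_q$. The left side is $\prod_X q^X[N+1-X]_q$ over $X\in\{A,B,C\}$, which contributes $q^S$. Dividing gives $c_3=q^{S-3}[3]_q![N-2]_q!\prod[N+1-X]_q$, matching \eqref{eq:rank-three-c3} with $[3]_q!=[2]_q[3]_q$. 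The domain \eqref{eq:rank-three-domain} guarantees $N+1-X\geq0$ even when $B=N+1$.

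\emph{Step 4: $\xi=[N+2]_q$.} The $c_0$ and $c_1$ terms vanish, leaving $c_2$ and $c_3$. Here $P([N+2]_q)=q^S\prod[N+2-X]_q$. The $c_2$ cubic evaluates to $[N+2]_q\,q[N+1]_q\,q^{N+1}$, and the $c_3$ cubic to $[N+2]_q\,q[N+1]_q\,q^2[N]_q$. After substituting $c_3$, the expression for $c_2$ becomes a difference. It has the form $q^{S}\bigl([N-1]\cdot\prod[N+2-X]-[N+2]\prod[N+1-X]\bigr)$ up to factorial prefactors. Since $(N+2-X)-1=N+1-X$ and $(N-1)+3=N+2$, this is $\F_{N-1;N+2-A,N+2-B,N+2-C}$. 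Matching powers of $q$ yields the exponent $S-N-2$.

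The main obstacle is not conceptual. It is the careful tracking of the factorial normalizations $[n-j]_q!$ together with the $q$-power exponents, so that Steps 2 and 4 produce exactly the stated prefactors. I would double-check the final formulas by a small sanity case, e.g.\ $N=2$, $A=B=C=2$, against direct expansion.
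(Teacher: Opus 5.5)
Your proposal is correct and is essentially the paper's own proof: evaluate the cubic identity at $\xi=0,1,[N+1]_q,[N+2]_q$ and solve in the order $(c_0,c_1,c_3,c_2)$. All of your intermediate evaluations are right, including the prefactor $[N+3]_q!/([N]_q[N+1]_q[N+3]_q)=[N+2]_q[N-1]_q!$ that you left as bookkeeping in Step 2; the only slip is cosmetic, since at $\xi=[N+1]_q$ the $c_0$ term already contains the factor $\xi-[N+1]_q$, so only $c_3$ survives from the outset, as you ultimately conclude.
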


\begin{proof}
We use repeatedly
\begin{equation}\label{eq:q-integer-evaluation-identities}
  1-[r]_q=-q[r-1]_q,
  \qquad
  [r]_q-[s]_q=q^s[r-s]_q
  \quad(r\geq s).
\end{equation}
At $\xi=0$, every term of \eqref{eq:rank-three-cubic} except the $c_0$ term
vanishes.  Cancelling the three negative signs gives
\[
  c_0
  =[A]_q[B]_q[C]_q
    \frac{[n]_q!}{[N+1]_q[N+2]_q[N+3]_q}
  =[A]_q[B]_q[C]_q[n]_q[N]_q!,
\]
which proves \eqref{eq:rank-three-c0}.

At $\xi=1$, the $c_2$ and $c_3$ terms vanish.  Substituting the formula for
$c_0$ and using \eqref{eq:q-integer-evaluation-identities} reduces the
remaining equation to
\begin{align*}
  &\frac{q^3}{[N+3]_q}
  \left(
    [N]_q[A]_q[B]_q[C]_q
    -[N+3]_q[A-1]_q[B-1]_q[C-1]_q
  \right)\\
  &\hspace{35mm}
  =c_1\frac{q^2}{[N+3]_q[N+2]_q[N-1]_q!}.
\end{align*}
The parenthesized expression is $\F_{N;A,B,C}$, proving
\eqref{eq:rank-three-c1}.

At $\xi=[N+1]_q$, the terms involving $c_0,c_1,c_2$ vanish.  The left side
is
\[
  q^S[N+1-A]_q[N+1-B]_q[N+1-C]_q,
\]
whereas the basis factor multiplying $c_3$ is
\[
  \frac{[N+1]_q\,q[N]_q\,q^2[N-1]_q}
       {[N+1]_q![3]_q!}
  =\frac{q^3}{[N-2]_q![3]_q!}.
\]
This is \eqref{eq:rank-three-c3}, since
$[3]_q!=[2]_q[3]_q$.

Finally evaluate at $\xi=[N+2]_q$.  The $c_0$ and $c_1$ terms vanish, and
the left side is
\[
  q^S[N+2-A]_q[N+2-B]_q[N+2-C]_q.
\]
The basis factors multiplying $c_2$ and $c_3$ are, respectively,
\[
  \frac{q^{N+2}}{[2]_q[N]_q!}
  \qquad\text{and}\qquad
  \frac{q^3[N+2]_q}{[N-1]_q![3]_q!}.
\]
Substitute \eqref{eq:rank-three-c3} and solve for $c_2$.  If
\[
  X=N+2-A,
  \qquad Y=N+2-B,
  \qquad Z=N+2-C,
\]
the result is
\begin{align*}
  c_2
  & =q^{S-N-2}[2]_q[N-2]_q![N]_q\bigl(
       [N-1]_q[X]_q[Y]_q[Z]_q\\
  &\hspace{50mm}
       -[N+2]_q[X-1]_q[Y-1]_q[Z-1]_q\bigr)\\
  & =q^{S-N-2}[2]_q[N-2]_q![N]_q
     \F_{N-1;X,Y,Z},
\end{align*}
which proves \eqref{eq:rank-three-c2}.
\end{proof}

%% file: sections/sequence_lemmas.tex
\section{Convolution and finite-window smoothing}\label{sec:sequence-lemmas}

This section isolates the two sequence-theoretic tools used throughout the
proof.  All sequences are extended by zero outside their displayed support.

\begin{lemma}[Convolution closure]\label{lem:convolution}
If two nonnegative log-concave sequences have interval support, then their
convolution is nonnegative and log-concave with interval support.
\end{lemma}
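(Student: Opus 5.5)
The plan is to prove the classical fact that log-concavity with interval support (equivalently, a Pólya frequency sequence of order two) is preserved under convolution. Let $a=(a_i)_{r\le i\le s}$ and $b=(b_j)_{t\le j\le u}$ be positive on their supports and log-concave there, and let $c_k=\sum_i a_ib_{k-i}$.

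\emph{Step 1: positivity and interval support.} Nonnegativity of $c$ is immediate. For $r+t\le k\le s+u$ some $i\in[r,s]$ has $k-i\in[t,u]$, so $c_k>0$ on $[r+t,s+u]$ and $c_k=0$ outside. Hence $c$ has interval support.

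\emph{Step 2: reformulation as an $\mathrm{TP}_2$ condition.} For a positive sequence with interval support, the conditions $a_k^2\ge a_{k-1}a_{k+1}$ are equivalent to
\[
a_ia_j\ge a_{i-1}a_{j+1}\quad\text{for all } i\le j,
\]
with zero extension outside the support. On the support this follows by multiplying the consecutive ratio inequalities $a_{k}/a_{k-1}\ge a_{k+1}/a_k$. When $i-1$ or $j+1$ leaves the support, the right-hand side vanishes. This says that the infinite Toeplitz matrix $(a_{i-j})$ is totally nonnegative of order two.

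\emph{Step 3: Cauchy--Binet.} The Toeplitz matrix of $c$ is the product of those of $a$ and $b$. By the Cauchy--Binet formula, each $2\times2$ minor of $(c_{i-j})$ is a sum of products of $2\times2$ minors of the two factors, and these sums are finite because the supports are finite. Every summand is nonnegative, so $(c_{i-j})$ is again $\mathrm{TP}_2$. Taking the minor with rows $k,k+1$ and columns $0,1$ gives $c_k^2-c_{k+1}c_{k-1}\ge0$.

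Alternatively, one can expand
\[
c_k^2-c_{k-1}c_{k+1}=\sum_{i,j}a_ia_j\bigl(b_{k-i}b_{k-j}-b_{k-1-i}b_{k+1-j}\bigr)
\]
and symmetrize over $(i,j)\leftrightarrow(j,i+1)$ to pair terms into products of $2\times 2$ minors of $a$ and $b$.

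The only delicate point is the boundary handling in Step 2. The passage from consecutive log-concavity to the full $\mathrm{TP}_2$ property uses interval support essentially, since internal zeros would break it. I would state this equivalence carefully with the zero extension and then cite Cauchy--Binet, which is the standard argument.
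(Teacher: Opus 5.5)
Your proposal is correct and follows the paper's own argument. Both proofs identify adjacent log-concavity with interval support as order-two total nonnegativity of the Toeplitz matrix $(a_{i-j})$, write the Toeplitz matrix of the convolution as a product, apply a Cauchy--Binet expansion that is finite because the supports are finite, and get interval support from the sum of the two support intervals. The one step you leave implicit, as the paper also does, is that a general minor $a_m a_{m+\alpha-\beta}-a_{m-\beta}a_{m+\alpha}$ with $\alpha,\beta\ge1$ is nonnegative; this follows by chaining your one-step inequalities $a_ia_j\ge a_{i-1}a_{j+1}$.
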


\begin{proof}
For an interval-supported nonnegative sequence $a$, adjacent log-concavity is
equivalent to total nonnegativity of order two of its lower-triangular Toeplitz
matrix
\[
  T(a)_{ij}=a_{i-j}.
\]
Indeed, on the positive support the adjacent inequalities say that the ratios
$a_j/a_{j-1}$ are weakly decreasing.  Cross-multiplication gives every
$2\times2$ Toeplitz minor; conversely, the adjacent minors give the original
inequalities, and the zero padding is harmless because the support is an
interval.
The Toeplitz matrix of a convolution is the matrix product
\[
  T(a*b)=T(a)T(b).
\]
All matrices may be truncated beyond the finitely many rows and columns
meeting a chosen minor, so the following application of Cauchy--Binet is
finite.
By Cauchy--Binet, every $2\times2$ minor of this product is a sum of products
of nonnegative $2\times2$ minors.  Thus $a*b$ is log-concave.  Its support is
the sum of two integer intervals and is therefore an interval.
\end{proof}

In particular, every product of $q$-integers is log-concave, and a monomial
factor only translates the support.

\begin{lemma}[Finite-window smoothing]\label{lem:finite-window}
Let $f=(f_0,\ldots,f_d)$ be a nonzero nonnegative palindromic unimodal
sequence with $f_0f_d>0$, and write $f(q)=\sum_{i=0}^df_iq^i$.  Put
\[
  \delta_i=f_i-f_{i-1},
  \qquad
  S_k=\sum_{i=0}^k f_i,
\]
where $f_i=0$ outside $0\leq i\leq d$.  Suppose that
$(\delta_0,\ldots,\delta_{\lfloor d/2\rfloor})$ is unimodal, with first
mode $p$.  Let $L\geq2$ be an integer.  If $p<L$ and
\begin{equation}\label{eq:prefix-hypothesis}
  S_k^2\geq S_{k-1}S_{k+1}
  \qquad\bigl(1\leq k\leq\min\{L-1,d-1\}\bigr)
\end{equation}
then $[L]_qf(q)$ is log-concave.
\end{lemma}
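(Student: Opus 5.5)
The plan is to write the coefficients of $g(q)=[L]_qf(q)$ as window sums $g_k=S_k-S_{k-L}$, with $S_j=0$ for $j<0$ and $S_j=S_d$ for $j\ge d$. Since $f$ is palindromic of degree $d$, $g$ is palindromic of degree $d+L-1$. Its support $\{0,\dots,d+L-1\}$ is an interval because $f_0>0$, and by symmetry it suffices to check $g_k^2\ge g_{k-1}g_{k+1}$ for $1\le k\le \lfloor (d+L-1)/2\rfloor$.

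\emph{Regime $k\le L-1$.} In this range $g_j=S_j$ for $j\le k$, so $g_{k-1}=S_{k-1}$ and $g_k=S_k$. If $k+1\le L-1$, then also $g_{k+1}=S_{k+1}$, and the inequality is exactly \eqref{eq:prefix-hypothesis}. Indices with $k\ge d$ need separate care: there $S$ is constant, so $g$ is constant up to the window edge and the inequality is trivial. At the boundary index $k=L-1$ we have $g_{k+1}=S_L-f_0\le S_L$, so the needed inequality follows from $S_{L-1}^2\ge S_{L-2}S_L$. This is again a prefix inequality, hence covered by \eqref{eq:prefix-hypothesis} when $L-1\le d-1$.

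\emph{Regime $k\ge L$ and $k$ at most the center.} Here every coefficient is a genuine window sum of length $L$ lying in $[0,d]$ or overlapping its left end. Write $g_{k+1}-g_k=f_{k+1}-f_{k+1-L}$ and $g_k-g_{k-1}=f_k-f_{k-L}$. The inequality $g_k^2\ge g_{k-1}g_{k+1}$ is equivalent to
\[
g_k\bigl[(g_k-g_{k-1})-(g_{k+1}-g_k)\bigr]\ge -(g_k-g_{k-1})(g_{k+1}-g_k).
\]
By unimodality and palindromy of $f$, and since $k$ is at or before the center of $g$, the window increments $f_{k+1}-f_{k+1-L}$ are nonnegative. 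The second difference of $g$ is then
\[
\delta_{k+1}-\delta_{k+1-L},
\]
a difference of the slopes $\delta$ at the two ends of the window. I will use unimodality of $(\delta_0,\dots,\delta_{\lfloor d/2\rfloor})$ with first mode $p<L$: the left end of the window sits at index $k+1-L\ge 1$, while the right end $k+1\ge L>p$ lies past the mode, in the decreasing part of $\delta$ or beyond the center where $\delta\le0$. I then want to show one of two things: either the second difference is $\le0$, which gives concavity of $g$ and hence log-concavity (since $g\ge0$), or, when the left-end slope exceeds the right-end slope, the deficiency is controlled by the product of the increments. I will compare $\delta_{k+1}$ with $\delta_{k+1-L}$ in cases according to whether $k+1-L\le p$. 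When $k+1-L\le p$, the left slope lies on the increasing part of $\delta$, and a direct comparison is needed.

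\emph{Main obstacle.} The hardest step is the transition zone in which the left end of the window is still on the rising part of $\delta$ while the right end is past the mode. There, concavity of $g$ can fail, and I have to use the extra slack $g_k\ge g_{k-1}$ together with $g_k\ge L\cdot\min(\text{window})$. I would first try to prove that in this zone $\delta_{k+1-L}\le\delta_{k+1}$ whenever $k+1\le\lfloor d/2\rfloor$, using the first-mode hypothesis. Near the center of $g$ I would handle the windows straddling $d/2$ by palindromy, which turns $f_{k+1}$ into $f_{d-k-1}$ and reduces these cases to the previous ones. If this direct concavity argument breaks down, the fallback is to rewrite the inequality as the $2\times2$ determinant
\[
\det\begin{pmatrix}S_k-S_{k-L}&S_{k+1}-S_{k+1-L}\\S_{k-1}-S_{k-1-L}&S_k-S_{k-L}\end{pmatrix}
\]
and expand it into prefix minors, which are nonnegative by \eqref{eq:prefix-hypothesis} extended via concavity of $S$ beyond index $L$. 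That extension comes from unimodality of $\delta$ past the mode.
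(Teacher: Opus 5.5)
Your treatment of $k\le L-1$ is correct, and it already settles the whole case $L\ge d+1$. So are the identity $g_{k+1}-2g_k+g_{k-1}=\delta_{k+1}-\delta_{k+1-L}$ and the observation that a nonpositive second difference gives the margin at once. The gap is the remaining case, and you have its direction reversed. When the left-end slope exceeds the right-end slope, the second difference is negative and you are done. The hard case is $\delta_{k+1}>\delta_{k+1-L}$, where the margin equals
\[
  (g_k-g_{k-1})^2-g_{k-1}\bigl(\delta_{k+1}-\delta_{k+1-L}\bigr),
\]
so a full window sum multiplies a positive quantity. This case really occurs for $k\ge L$. Take $f=(1,3,8,12,15,17,18,17,15,12,8,3,1)$ and $L=3$: every hypothesis holds, since $\delta=(1,2,5,4,3,2,1)$, $p=2$, and $S=(1,4,12,24,\ldots)$. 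Yet $g_4-g_3=12>11=g_3-g_2$. Since the earlier margins hold here, you cannot establish, in either direction, a sign for $\delta_{k+1}-\delta_{k+1-L}$ that holds throughout the transition zone. The local slack you list ($g_k\ge g_{k-1}$, $g_k\ge L\cdot\min$) does not control the term $g_{k-1}(\delta_{k+1}-\delta_{k+1-L})$.

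What is missing is a mechanism that carries \eqref{eq:prefix-hypothesis} at $k=L-1$ into the window regime. The paper's proof works as follows:
\begin{enumerate}
  \item The indices with positive second difference form an initial, possibly empty, run $k=L-1,L,\ldots$. This uses $p<L$, left-half unimodality, and anti-palindromy of $\delta$.
  \item The second difference is nonincreasing along that run.
  \item Because the increments are nonnegative there, the margin at $k+1$ is at least the margin at $k$ plus (second difference)$\cdot(g_{k+1}-g_k)$, hence at least the margin at $k$.
  \item The run is seeded by $S_{L-1}^2-S_{L-2}(S_L-f_0)=(S_{L-1}^2-S_{L-2}S_L)+f_0S_{L-2}$.
  \item Outside the run the second difference is nonpositive, which gives the margin directly.
\end{enumerate}

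Your determinant fallback does not repair this. Expanding it by multilinearity gives
\[
  (S_k^2-S_{k-1}S_{k+1})+(S_{k-L}^2-S_{k-L-1}S_{k-L+1})
  +(S_{k-L+1}S_{k-1}-S_{k-L}S_k)-(S_{k-L}S_k-S_{k-L-1}S_{k+1}).
\]
For a log-concave $S$, the last bracket is nonnegative, so it enters with the wrong sign. Log-concavity of $S$ alone is genuinely insufficient. For example, $S=(1,10,11,12,13,\ldots)$ is log-concave, but $L=2$ gives $g=(1,10,10,2,2,\ldots)$ with $2^2<10\cdot 2$. Any valid argument therefore has to use the shape of $\delta$ inside the window regime, not only minors of $S$. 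Separately, $S$ is not concave beyond index $L$ while $f$ is still increasing. What propagates past the first mode of $\delta$ is log-concavity of $S$, and even that does not suffice, as the example shows.
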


\begin{proof}
First suppose $L\geq d+1$, and put $T=S_d$.  On the left side of the
palindromic output, the coefficients are the prefix sums $S_k$, followed by a
constant plateau of value $T$ if $L>d+1$.  The strict prefix is covered by
\eqref{eq:prefix-hypothesis}.  If $L=d+1$, the center margin is
\[
  T^2-(T-f_d)(T-f_0)=T^2-(T-f_0)^2\geq0.
\]
If $L\geq d+2$, the prefix-to-plateau margin is
\[
  T^2-(T-f_d)T=Tf_d\geq0,
\]
and internal plateau margins vanish.  Palindromicity handles the other half.

Now assume $L\leq d$.  For $a\geq0$, define
\[
  R_a=\sum_{i=a}^{a+L-1}f_i,
  \qquad R_{-1}=S_{L-2},
\]
and set
\[
  d_a=R_a-R_{a-1},
  \qquad e_a=d_{a+1}-d_a.
\]
Cancellation at the two ends gives
\begin{equation}\label{eq:window-differences}
  d_a=f_{a+L-1}-f_{a-1},
  \qquad
  e_a=\delta_{a+L}-\delta_a.
\end{equation}
The log-concavity margin centered at $R_a$ is
\begin{equation}\label{eq:window-margin}
  M_a=R_a^2-R_{a-1}R_{a+1}=d_a^2-R_{a-1}e_a.
\end{equation}
At the prefix-to-window transition,
\begin{align}
  M_0
  &=S_{L-1}^2-S_{L-2}(S_L-f_0)\notag\\
  &=\bigl(S_{L-1}^2-S_{L-2}S_L\bigr)+f_0S_{L-2}>0.
  \label{eq:transition-margin}
\end{align}

Put
\[
  h=\left\lfloor\frac d2\right\rfloor,
  \qquad
  A=\left\lfloor\frac{d-L+1}{2}\right\rfloor.
\]
Only $0\leq a\leq A$ is needed on the left half of the output.  If
$e_b>0$, then
\[
  \delta_{b+L}>\delta_b\geq0.
\]
The full difference sequence is anti-palindromic about the coefficient
center, so positivity forces $b+L\leq h$.  Since $b+L>L>p$, unimodality of
the left-half differences rules out $b\geq p$.  Hence
\begin{equation}\label{eq:positive-e-location}
  e_b>0\quad\Longrightarrow\quad b<p\ \text{ and }\ b+L\leq h.
\end{equation}
If $0\leq a<b$ and $e_b>0$, then left-half unimodality gives
\[
  \delta_a\leq\delta_b,
  \qquad
  \delta_{a+L}\geq\delta_{b+L},
\]
because $a,b<p<a+L<b+L\leq h$.  Therefore
\begin{equation}\label{eq:positive-e-initial}
  e_a=\delta_{a+L}-\delta_a
  \geq\delta_{b+L}-\delta_b=e_b>0.
\end{equation}
Thus the positive $e_a$ form an initial interval.

While $e_a>0$, the increment at $a+1$ is nonnegative because $a<p$, and
the increment at $a+L+1$ is nonpositive on the post-mode left half.  If
$a+L=h$, the same conclusion follows from the central parity relation
$\delta_{h+1}\leq\delta_h$.  Consequently
\begin{equation}\label{eq:e-monotone}
  e_{a+1}-e_a
  =(\delta_{a+L+1}-\delta_{a+L})
   -(\delta_{a+1}-\delta_a)\leq0.
\end{equation}

If $e_a\leq0$, \eqref{eq:window-margin} gives $M_a\geq0$.  For $a<A$ in
the initial positive interval, the next moving coefficient lies before the
output center, so $2a+L\leq d$ and palindromic unimodality yields
\[
  d_{a+1}=f_{a+L}-f_a\geq0.
\]
Using \eqref{eq:e-monotone},
\begin{align}
  M_{a+1}
  &=d_{a+1}^2-R_ae_{a+1}\notag\\
  &\geq d_{a+1}^2-R_ae_a
   =M_a+e_ad_{a+1}
  \geq M_a.
  \label{eq:margin-propagation}
\end{align}
Starting from \eqref{eq:transition-margin}, this proves every moving-window
margin on the left.  Earlier margins are exactly
\eqref{eq:prefix-hypothesis}, and palindromicity reflects the result to the
right half.
\end{proof}

%% file: sections/quartic_primitive.tex
\section{The quartic kernel and its primitive}\label{sec:quartic-primitive}

Assume throughout this section that $M,x,y,z$ are integers satisfying
\begin{equation}\label{eq:primitive-domain}
  M\geq1,
  \qquad 2\leq x\leq y\leq z\leq M+2,
  \qquad (x,y,z)\neq(M+2,M+2,M+2).
\end{equation}
Set
\begin{equation}\label{eq:gap-parameters}
  a=x,
  \qquad r=y-x,
  \qquad s=z-y,
  \qquad t=M+2-z,
  \qquad R=r+s+t.
\end{equation}
Then $a\geq2$, $r,s,t\geq0$, $R\geq1$, and
\begin{equation}\label{eq:M-a-R}
  M=a+R-2.
\end{equation}

We repeatedly use the elementary transfer identity
\begin{equation}\label{eq:transfer}
  [u]_q[v]_q-[u+1]_q[v-1]_q
  =q^{v-1}[u-v+1]_q,
\end{equation}
valid for integers $v\geq1$ and $u\geq v-1$.
Indeed, after multiplying by $(1-q)^2$, the left side becomes
$(1-q)(q^{v-1}-q^u)$, which gives the right side immediately.

\begin{proposition}[Common-center decomposition]\label{prop:common-centered}
Under \eqref{eq:primitive-domain},
\begin{equation}\label{eq:F-to-G}
  \F_{M;x,y,z}=q^{a-1}G,
\end{equation}
where
\begin{align}
G={}&[a+r]_q[a+r+s]_q[R-1]_q\notag\\
&+q^r[a-1]_q[a+r+s]_q[s+t]_q\notag\\
&+q^{r+s}[a-1]_q[a+r-1]_q[t+1]_q.
\label{eq:G-positive}
\end{align}
Every nonzero summand in \eqref{eq:G-positive} is symmetric about the same
center.  Consequently, after its leading and trailing zeroes are deleted,
$\F_{M;x,y,z}$ has a nonnegative palindromic unimodal coefficient sequence
with interval support.
\end{proposition}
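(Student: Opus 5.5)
The plan is to telescope the difference $\F_{M;x,y,z}$ in three steps. Each step uses the transfer identity \eqref{eq:transfer} to move one unit from the factor $[M+3]_q$ onto one of the factors $[x-1]_q,[y-1]_q,[z-1]_q$. The order matters, because the hypothesis $u\geq v-1$ of \eqref{eq:transfer} must hold at each step. I would transfer to $z$ first, then $y$, then $x$:
\begin{align*}
P_0&=[M+3]_q[x-1]_q[y-1]_q[z-1]_q, &
P_1&=[M+2]_q[x-1]_q[y-1]_q[z]_q,\\
P_2&=[M+1]_q[x-1]_q[y]_q[z]_q, &
P_3&=[M]_q[x]_q[y]_q[z]_q,
\end{align*}
so that $\F_{M;x,y,z}=(P_1-P_0)+(P_2-P_1)+(P_3-P_2)$.

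Next I compute the three differences by \eqref{eq:transfer}.
\begin{itemize}
\item With $(u,v)=(M+2,z)$ one gets $P_1-P_0=q^{z-1}[M+3-z]_q[x-1]_q[y-1]_q$. The hypothesis holds since $z\leq M+2$.
\item With $(u,v)=(M+1,y)$ one gets $P_2-P_1=q^{y-1}[M+2-y]_q[x-1]_q[z]_q$.
\item With $(u,v)=(M,x)$ one gets $P_3-P_2=q^{x-1}[M+1-x]_q[y]_q[z]_q$. Here $x\leq M+1$ is needed; this holds because $x=M+2$ would force $x=y=z=M+2$, which \eqref{eq:primitive-domain} excludes.
\end{itemize}
Substituting \eqref{eq:gap-parameters} and \eqref{eq:M-a-R} gives $M+3-z=t+1$, $M+2-y=s+t$ and $M+1-x=R-1$. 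The exponents become $x-1=a-1$, $y-1=(a-1)+r$ and $z-1=(a-1)+r+s$. Factoring out $q^{a-1}$ then yields exactly \eqref{eq:F-to-G} with $G$ as in \eqref{eq:G-positive}, the third, second and first summands coming from the $z$, $y$ and $x$ steps respectively. This part is routine bookkeeping.

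For the common center, I would compute twice the center of each nonzero summand, namely the sum of the lowest and highest exponents. The first summand gives $(a+r-1)+(a+r+s-1)+(R-2)$. The second gives $2r+(a-2)+(a+r+s-1)+(s+t-1)$. The third gives $2(r+s)+(a-2)+(a+r-2)+t$. Each equals $2a+3r+2s+t-4$, so all three summands share one center.

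It remains to assemble the conclusion. By \cref{lem:convolution}, every product of $q$-integers, shifted by a monomial, is nonnegative and log-concave with interval support, hence unimodal. It is also palindromic about its own center. A sum of nonnegative palindromic unimodal polynomials with a common center is again palindromic and unimodal about that center. Its support is the union of intervals that each contain the center, so it is an interval. Nonvanishing follows because the third summand is never zero: $a\geq2$ gives $[a-1]_q\neq0$ and $[a+r-1]_q\neq0$, and $[t+1]_q\neq0$. The first summand may vanish when $R=1$, and the second when $s=t=0$, but such zero terms are harmless. I expect the main obstacle to be only choosing the telescoping order so that every transfer is admissible, including the boundary case $z=M+2$. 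The order $z,y,x$ above handles it, since the only inadmissible configuration is the excluded triple $(M+2,M+2,M+2)$.
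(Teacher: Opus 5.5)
Your proposal is correct and follows the paper's proof essentially verbatim: the same telescoping chain through $[M+1]_q[x-1]_q[y]_q[z]_q$ and $[M+2]_q[x-1]_q[y-1]_q[z]_q$, the same three applications of \eqref{eq:transfer}, the same common doubled center $2a+3r+2s+t-4$, and the same assembly of nonnegative palindromic unimodal summands. Your nonvanishing argument is slightly more direct than the paper's case inspection at $R=1$: the third summand $q^{r+s}[a-1]_q[a+r-1]_q[t+1]_q$ is never zero because $a\geq2$ and $t\geq0$.
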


\begin{proof}
Insert the three intermediate products obtained by raising the first factor
and lowering, one at a time, the last three factors in
\eqref{eq:intro-quartic}.  Explicitly,
\begin{align*}
\F_{M;x,y,z}
={}&[M]_q[a]_q[a+r]_q[a+r+s]_q\\
 &-[M+1]_q[a-1]_q[a+r]_q[a+r+s]_q\\
 &+[M+1]_q[a-1]_q[a+r]_q[a+r+s]_q\\
 &-[M+2]_q[a-1]_q[a+r-1]_q[a+r+s]_q\\
 &+[M+2]_q[a-1]_q[a+r-1]_q[a+r+s]_q\\
 &-[M+3]_q[a-1]_q[a+r-1]_q[a+r+s-1]_q.
\end{align*}
Pairing consecutive lines, applying \eqref{eq:transfer}, and using
$M=a+r+s+t-2$, we obtain
\eqref{eq:F-to-G}--\eqref{eq:G-positive}.

The sum $G$ is nonzero.  Indeed, its first line is nonzero when $R\geq2$;
when $R=1$, inspection of the three possibilities for $(r,s,t)$ shows that
one of the last two lines is nonzero.

Twice the symmetry center of $q^u\prod_i[m_i]_q$ is
\[
  2u+\sum_i(m_i-1).
\]
For each line of \eqref{eq:G-positive}, this value is
\begin{equation}\label{eq:common-center}
  D=2a+3r+2s+t-4.
\end{equation}
Each nonzero line is nonnegative, palindromic, unimodal, and supported on an
integer interval.  A sum of such sequences having the same center has the
same properties.
\end{proof}

For the normalized coefficient sequence $f=(f_0,\ldots,f_d)$ of $G$, put
\begin{equation}\label{eq:primitive-delta}
  \delta_0=f_0,
  \qquad
  \delta_j=f_j-f_{j-1}
  \quad\left(1\leq j\leq\left\lfloor\frac d2\right\rfloor\right).
\end{equation}

\begin{theorem}[Primitive theorem]\label{thm:primitive}
The sequence $\delta$ in \eqref{eq:primitive-delta} is nonnegative and
unimodal.  If $p$ is its first mode, then
\begin{equation}\label{eq:primitive-mode-bound}
  p\leq R=M-x+2\leq M.
\end{equation}
\end{theorem}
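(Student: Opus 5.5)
The plan is to read $\delta$ as a count of $\mathfrak{sl}_2$-strings, using the common-center decomposition of \cref{prop:common-centered}. Any nonnegative sum of $q$-integers $q^{u}[k]_q$ that all share one symmetry center has the following property: on the left half, the difference sequence $\delta_j$ counts the summands whose lowest degree is $j$. This holds because each centered string contributes $+1$ to $\delta$ at its starting degree and $0$ elsewhere up to the center. So the first step is to expand each of the three lines of \eqref{eq:G-positive} by iterated Clebsch--Gordan:
\[
  [m_1]_q[m_2]_q=\sum_{i=0}^{\min(m_1,m_2)-1}q^{i}[m_1+m_2-1-2i]_q ,
\]
applied twice. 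This writes every line as a multiset of centered strings, and all of them are centered at $D/2$ by \eqref{eq:common-center}. Then $\delta=\delta^{(1)}+\delta^{(2)}+\delta^{(3)}$, where $\delta^{(\ell)}_j$ is the number of strings of line $\ell$ starting at $j$ (after the common normalization that deletes leading zeroes). This immediately gives $\delta\geq0$.

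The second step is to compute each $\delta^{(\ell)}$ explicitly. For a product $[m_1]_q[m_2]_q[m_3]_q$ with $m_1\leq m_2\leq m_3$, the starting offsets are the sums $i+k$ with $0\leq i<m_1$ and $0\leq k<\min(m_2,\,m_3+m_1-1-2i)$. Counting these gives a piecewise-linear ``truncated tent''. It rises with slope $1$ up to $m_1-1$ (or up to the appropriate smaller break), then has a plateau, and then descends with slope at most $1$ in absolute value, possibly in two stages, until the center. I will record the three tents with their breakpoints in terms of $a,r,s,t$.

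\begin{itemize}
\item Line one has factors $a+r$, $a+r+s$, $R-1$ and no shift. Its ascent stops by $R-2$.
\item Line two is shifted by $q^{r}$ and has smallest relevant factor at most $s+t$, so its ascent stops by $r+s+t-1=R-1$.
\item Line three is shifted by $q^{r+s}$ and contains $[t+1]_q$, so its ascent stops by $r+s+t=R$.
\end{itemize}
Each $\delta^{(\ell)}$ is therefore nondecreasing before index $R$.

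The third step, which I expect to be the main obstacle, is unimodality of the sum. A sum of unimodal sequences with different modes need not be unimodal. The mechanism I would try first is the following ``one-crossing'' control. Let $j_0$ be the first index at which some $\delta^{(\ell)}$ starts to decrease. I will show that every other $\delta^{(\ell')}$ has already stopped increasing by $j_0$. Failing that, I will show that on the overlap window the total slope is governed by a comparison of slopes: the ascents have slope exactly $1$ (or $0$), while the descents have slope $-1$ or $-2$ with explicit onsets. Then one checks that $\delta_{j+1}-\delta_j$, computed as a sum of at most three terms in $\{-2,-1,0,1\}$, changes sign from nonnegative to nonpositive at most once. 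This reduces to a finite case analysis on the order of the breakpoints. Those breakpoints are linear forms in $a,r,s,t$, so the analysis can be organized by comparing $a$ with $s+t$, $t+1$, and $R-1$. The case $R=1$, where line one vanishes, is handled separately, using the inspection already made in the proof of \cref{prop:common-centered}.

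Finally, the mode bound follows from the same case analysis. Every $\delta^{(\ell)}$ is nonincreasing from index $R$ on (and nondecreasing before its own peak, which is at most $R$), so the total slope after $R$ is nonpositive and the first mode satisfies $p\leq R$. The identity $R=M-x+2$ is \eqref{eq:M-a-R} with $a=x$, and $R\leq M$ follows from $x\geq2$.
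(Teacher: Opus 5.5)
\textbf{There is a genuine gap in step three.} Your first two steps match the paper. Expanding the two $a$-dependent factors of each line of \eqref{eq:G-positive} by Clebsch--Gordan makes $\delta_j$ a count of strings starting at $j$, equivalently of intervals \eqref{eq:primitive-interval} containing $j$. This gives $\delta\geq0$. The three-factor primitive \eqref{eq:three-factor-primitive} gives the line mode bounds $R-2,R-1,R$, so the total increment is nonpositive from index $R+1$ on.

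Step three is unimodality of the sum before $R$, which is the whole content of the theorem, and you do not carry it out. The mechanism you propose is false. Take $(a,r,s,t)=(2,0,3,0)$, i.e.\ $\F_{3;2,2,5}$, so that $R=3$ and $G=[2]_q^2[5]_q+[3]_q[5]_q+q^3$. The three line primitives are $(1,2,1,0)$, $(1,1,1,0)$ and $(0,0,0,1)$, and their sum is $(2,3,2,1)$.
\begin{itemize}
\item Line one starts to decrease at $j_0=2=R-1$, while line three still rises at $R=3$. So it is not true that the other lines have stopped increasing by $j_0$.
\item The same example refutes your step-two claim that each $\delta^{(\ell)}$ is nondecreasing before $R$. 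The mode bounds only give the reverse: nonincreasing after the mode.
\end{itemize}
The sum is unimodal here only because lines one and two both drop at $R$. Your fallback, that the total increment ``changes sign at most once,'' just restates the conclusion.

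\textbf{The missing argument.} You need to track start and end events of all intervals jointly, not line by line. All starts lie in \eqref{eq:start-ranges}, the last at $T$ from \eqref{eq:last-start}. Every $P$-branch end is at least $T$, and every $Q$-branch end is at least $R-1$.
\begin{itemize}
\item If $T<R$, increments are nonnegative before $T$ and nonpositive after it.
\item If $R\leq T$, nothing ends before $R-1$, and by the line mode bounds nothing rises after $R$. So the only possible defect is a drop at $R-1$ followed by a rise at $R$.
\end{itemize}
You then need the seam count to exclude that defect.
\begin{enumerate}
\item A drop at $R-1$ forces the family-$0$, $i=0$ interval to end there, with nothing starting there.
\item Since the family-$0$ start range is an initial segment, there is then no family-$0$ start at $R$ either.
\item The condition $R\leq T$ gives $t\leq a-1$. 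This forces the family-$0$, $i=1$ and family-$1$, $i=0$ intervals onto their $Q$-branches, so both end at $R$.
\item These two ends offset the at most two starts at $R$, one each from families $1$ and $2$.
\end{enumerate}
Your example is exactly this seam configuration. Without this argument or an equivalent one, your proposal establishes neither unimodality nor $p\leq R$.
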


\begin{proof}
Assume first that $R\geq2$.  Then $[R-1]_q\neq0$, so $G$ has nonzero
constant term and $f$ is its ordinary coefficient sequence.

The Clebsch--Gordan expansion for two $q$-integers is
\begin{equation}\label{eq:clebsch-gordan}
  [A]_q[B]_q
  =\sum_{i=0}^{\min(A,B)-1}q^i[A+B-1-2i]_q.
\end{equation}
For a shifted product $q^u[P]_q[Q]_q$, its forward difference on the left
of its symmetry center is one precisely on
\begin{equation}\label{eq:two-factor-interval}
  [u,u+\min(P,Q)-1]
\end{equation}
and is zero elsewhere.  Apply \eqref{eq:clebsch-gordan} to the two
$a$-dependent factors in each line of \eqref{eq:G-positive}.  This represents
$\delta_j$ as the number of intervals containing $j$.  The interval data are
\begin{equation}\label{eq:interval-data}
\begin{array}{c|c|c|c|c}
\text{family}&i&u_i&P_i&Q_i\\ \hline
0&0\leq i<a+r&i&2a+2r+s-1-2i&R-1\\
1&0\leq i<a-1&r+i&2a+r+s-2-2i&s+t\\
2&0\leq i<a-1&r+s+i&2a+r-3-2i&t+1.
\end{array}
\end{equation}
Rows with $Q_i=0$ are omitted, and each remaining interval is
\begin{equation}\label{eq:primitive-interval}
  I_i=[u_i,u_i+\min(P_i,Q_i)-1].
\end{equation}
Here an end event at $j$ means that the interval has right endpoint $j-1$.
Thus
\begin{equation}\label{eq:event-difference}
  \delta_j=\#\{I_i:j\in I_i\},
  \qquad
  \delta_j-\delta_{j-1}
  =\#\{\text{starts at }j\}-\#\{\text{ends at }j\}.
\end{equation}

The three start ranges are
\begin{equation}\label{eq:start-ranges}
  [0,a+r-1],
  \qquad [r,a+r-2],
  \qquad [r+s,a+r+s-2].
\end{equation}
Put
\begin{equation}\label{eq:last-start}
  T=\max\{a+r-1,a+r+s-2\}.
\end{equation}
The earliest ends supplied by a $P_i$-branch in the three families are
\begin{equation}\label{eq:P-ends}
  a+r+s,
  \qquad a+2r+s,
  \qquad a+2r+s-1,
\end{equation}
all at least $T$.  The earliest $Q_i$-branch ends are
\begin{equation}\label{eq:Q-ends}
  R-1,
  \qquad R,
  \qquad R+1.
\end{equation}
If $T<R$, no interval ends before $T$ and no interval starts after $T$.
The increments in \eqref{eq:event-difference} are therefore nonnegative
before $T$ and nonpositive after $T$.

Suppose that $R\leq T$.  No interval ends before $R-1$.  The only possible
failure of one sign change is a negative increment at $R-1$ followed by a
positive increment at $R$.  A negative increment at $R-1$ forces the
family-$0$ interval with $i=0$ to end there; it is the unique possible end at that
index, and negativity forces no interval to start there.  The family-$0$
start range in \eqref{eq:start-ranges} is contiguous and begins at zero, so
the absence of a family-$0$ start at $R-1$ also excludes one at $R$.  In
particular $R\geq2$ and $s+t>0$.  The inequality $R\leq T$ implies
\[
  s+t\leq a-1
  \quad\text{or}\quad
  t\leq a-2,
\]
and hence
\begin{equation}\label{eq:t-bound}
  t\leq a-1\leq2a+r-2.
\end{equation}
At $R$, the family-$0$ interval with $i=1$ and the family-$1$ interval with
$i=0$ both use their $Q$-branches, because in both cases
\begin{equation}\label{eq:seam-branches}
  P_i-Q_i=2a+r-t-2\geq0.
\end{equation}
Both end at $R$.  Family $0$ has no start there, and each of the other two
families contributes at most one start.  Hence
\begin{equation}\label{eq:seam-nonpositive}
  \delta_R-\delta_{R-1}\leq0.
\end{equation}

It remains to rule out a later rise.  The left primitive of a three-factor
product with $1\leq\ell_1\leq\ell_2\leq\ell_3$ is
\begin{equation}\label{eq:three-factor-primitive}
\pi_j=\max\!\left\{
0,
\min\!\left(
j+1,\ell_1,\ell_1+\ell_2-1-j,
\ell_1+\ell_2+\ell_3-2-2j
\right)
\right\}.
\end{equation}
Indeed, subtract adjacent coefficients after convolving
$[\ell_2]_q[\ell_3]_q$ with $[\ell_1]_q$; the difference of the resulting
trapezoid coefficients simplifies to \eqref{eq:three-factor-primitive}.
The formula is the minimum of one increasing and three nonincreasing
functions, truncated below at zero, so its first mode is at most
$\ell_1-1$.

Apply this to the three lines of \eqref{eq:G-positive}.  After reordering
the factors, the smallest length is no larger than $R-1$, $s+t$, and $t+1$,
respectively.  Adding the shifts $0,r,r+s$ bounds the three primitive modes
by
\begin{equation}\label{eq:line-mode-bounds}
  R-2,
  \qquad R-1,
  \qquad R.
\end{equation}
Their sum cannot rise after $R$.  Together with
\eqref{eq:seam-nonpositive}, this proves
\eqref{eq:primitive-mode-bound} for $R\geq2$.

If $R=1$, the only gap triples are
\[
  (r,s,t)=(1,0,0),(0,1,0),(0,0,1).
\]
After deleting the necessary leading shift, \eqref{eq:G-positive} becomes,
respectively,
\begin{equation}\label{eq:R-one-cores}
  [a-1]_q[a]_q,
  \qquad
  [a-1]_q[2]_q[a]_q,
  \qquad
  [a-1]_q\bigl([a]_q+[2]_q[a-1]_q\bigr).
\end{equation}
The first primitive is a string of ones.  For the second,
\eqref{eq:three-factor-primitive} gives first mode at most $1$ because one
factor is $[2]_q$.  The third primitive is $(2)$ for $a=2$ and
$(2,3,\ldots,3)$ for $a\geq3$.  Each first mode is at most $1=R$.
\end{proof}

The exclusion in \cref{thm:uniform-quartic} is sharp for the algebraic
family.  Direct expansion of $[m]_q=(1-q^m)/(1-q)$ gives
\begin{equation}\label{eq:all-max-negative}
  \F_{M;M+2,M+2,M+2}=-q^M[2M+3]_q.
\end{equation}

%% file: sections/quartic_prefix.tex
\section{Log-concavity of cumulative prefixes}\label{sec:quartic-prefix}

We now establish the second analytic input to the uniform quartic theorem.

\begin{theorem}[Cumulative-prefix theorem]\label{thm:cumulative-prefixes}
Let $M,x,y,z$ be integers satisfying
\[
  M\geq1,
  \qquad 1\leq x\leq y\leq z\leq M+2,
  \qquad x\leq M+1.
\]
After deleting the leading and trailing zeroes of
$\F_{M;x,y,z}$, write its coefficient sequence as
$f_0,\ldots,f_d$ and put
\[
  P_k=\sum_{i=0}^{k}f_i.
\]
Then
\begin{equation}\label{eq:cumulative-prefix-margin}
  P_k^2\geq P_{k-1}P_{k+1}
  \qquad(1\leq k<d).
\end{equation}
\end{theorem}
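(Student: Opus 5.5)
The plan is to work with the normalized kernel $G$ from \cref{prop:common-centered} rather than with $\F_{M;x,y,z}$ directly. Deleting leading zeroes replaces $\F$ by the coefficient sequence of $G$ (or by a shifted core in the degenerate cases), so the prefixes $P_k$ are the partial sums of a nonnegative palindromic unimodal sequence.

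The case $x=1$ is not covered by \eqref{eq:primitive-domain} and is dispatched first. There $[x-1]_q=0$, so $\F_{M;1,y,z}=[M]_q[y]_q[z]_q$. Its prefix sequence is the coefficient sequence of $[M]_q[y]_q[z]_q\cdot(1+q+q^2+\cdots)$ truncated at $d$. This is the Toeplitz convolution of log-concave interval-supported sequences with the all-ones sequence, which is itself log-concave with interval support. So \cref{lem:convolution}, or equivalently the $\mathrm{TP}_2$ Toeplitz argument in its proof, gives \eqref{eq:cumulative-prefix-margin} at once. I will note that truncating a log-concave sequence to an initial segment keeps the adjacent inequalities for $k<d$. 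The same observation handles every line of \eqref{eq:G-positive} separately: each line's prefix sequence is log-concave, being a product of $q$-integers times $1/(1-q)$, truncated.

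The real issue is that a sum of three log-concave prefix sequences need not be log-concave. I will express the prefix $P_k$ of $G$ through the primitive. Since $\delta$ counts intervals $I_i$ from \eqref{eq:interval-data}, we get $f_j=\sum_{m\le j}\delta_m$, and $P_k$ becomes a second cumulative sum of interval indicators. Each interval $[u,u+\ell-1]$ contributes a piecewise quadratic in $k$: zero, then $\binom{k-u+2}{2}$, then linear growth. This holds on the left half. On the right half, palindromicity gives $P_k=P_d-P_{d-1-k}$, and the margin there is handled by the same reflection argument used at the center of \cref{lem:finite-window}. So $P_k$ on the left half is an explicit sum of ``ramps'' whose breakpoints are the starts and ends listed in \eqref{eq:start-ranges}, \eqref{eq:P-ends}, \eqref{eq:Q-ends}.

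I then write $P_k^2-P_{k-1}P_{k+1}=(\Delta P_k)^2-P_{k-1}\,\Delta^2P_k$, where $\Delta P_k=f_k$ and $\Delta^2P_k=f_{k+1}-f_k=\delta_{k+1}$. The inequality to prove becomes
\[
  f_k^2\geq P_{k-1}\,\delta_{k+1}.
\]
It is trivial where $\delta_{k+1}\le0$. Where $\delta_{k+1}>0$, I use the interval structure:
\begin{itemize}
  \item $f_k=\sum_{m\le k}\delta_m$ and $P_{k-1}=\sum_{m\le k-1}(k-m)\delta_m$;
  \item by \cref{thm:primitive}, $\delta$ is unimodal with mode $p\le R$;
  \item so for $k<p$ the inequality reduces to a moment inequality $\bigl(\sum_{m\le k}\delta_m\bigr)^2\ge\delta_{k+1}\sum_{m<k}(k-m)\delta_m$ for a nondecreasing sequence.
\end{itemize}
For a nondecreasing $\delta$ this holds after comparison with the extremal case of constant $\delta$: there $(k+1)^2c^2\ge c^2\binom{k+1}{2}$. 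Beyond the mode the positive increments come only from the seam analysis (around $R$ and $T$), and there I will split into the finitely many configurations of which families are still starting. These are the ``six signed walls''. For each I reduce to an inequality of odd-interval type: a sum over an odd-length interval of a quadratic weight dominated by the square of the sum.

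I expect the main obstacle to be this seam regime, where $\delta$ is not monotone on $[0,k+1]$, because $\delta$ may rise after a plateau. The crude comparison then fails and one must use that $f_k$ already contains the full mass of families $0$ and $1$. I would first try to bound $\delta_{k+1}\le\delta_{k'}$ for a suitable earlier $k'$ and reuse the monotone estimate. If that loses too much, I would enumerate the wall cases by the sign of $s,t$ and by $R\lessgtr T$ and verify each via an explicit quadratic in $k$.
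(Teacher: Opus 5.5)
There is a genuine gap. Write $\mathcal Q_k$ for $P_k^2-P_{k-1}P_{k+1}$. Your reduction $\mathcal Q_k=f_k^2-P_{k-1}\delta_{k+1}$ is correct, and it disposes of every $k$ with $\delta_{k+1}\le0$. The problem is the step you treat as routine. For nondecreasing $\delta$, the inequality $\bigl(\sum_{m\le k}\delta_m\bigr)^2\ge\delta_{k+1}\sum_{m<k}(k-m)\delta_m$ does not follow by comparison with constant $\delta$. The constant sequence is the most favourable case, not the extremal one. For example, $\delta=(1,1,1,4)$ with $k=2$ gives $9<12$. More generally, $k+1$ ones followed by a $4$ fail for every $k\ge2$. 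Monotonicity alone cannot bound $\delta_{k+1}$ against the earlier mass, and your argument supplies nothing else.

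What makes the margin nonnegative here is the rigid shape of $\delta$ before the reflected term enters. There $\delta_m=[q^m]U$ is a signed sum of ramps $(m-W+1)_+$. In the relevant range the positive walls are $0,r,r+s$ and the negative walls are $x+r-1,x+r,x+r+s-1$. Turning this into an inequality that holds uniformly in all wall positions is the actual content of the proof. The paper does it in four steps:
\begin{itemize}
  \item the exact moment identity for the unreflected margin, $12Q_k=3\mu_2^2-2\mu_1\mu_3-\mu_1^2$;
  \item its rewriting as $8Q_k=\Psi(\mathcal X)$, by pairing each negative wall with an earlier positive one;
  \item \cref{lem:odd-interval} for the wall word $+++---$;
  \item \cref{lem:separated-block} for the wall word $++--+-$ when $s>x$.
\end{itemize}
Your fallback of checking ``an explicit quadratic in $k$'' in each case does not capture this. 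In each chamber $P_k$ is cubic, and the margin must be controlled for all parameters at once.

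You have also placed the difficulty in the wrong regime. By \cref{thm:primitive}, $\delta$ has no rises after its first mode $p$. Let $m$ be the first mode of $f$. For $p-1\le k\le m-1$, the identity $\mathcal Q_{k+1}-\mathcal Q_k=f_{k+1}\delta_{k+1}-P_k(\delta_{k+2}-\delta_{k+1})$ propagates nonnegativity with no case analysis. The ``plateau followed by a rise'' you anticipate lies inside the nondecreasing stretch $[0,p]$. That is exactly the stretch you dismissed. For instance, $M=14$ and $(x,y,z)=(2,2,6)$ give $\delta=(2,3,3,3,4,3,1,0,\ldots)$ with $p=4$.

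Two further cases are missing. When $p=R=\tau+1$, the last required margin involves the reflected term $-q^\tau V$, so $\delta_R$ is not given by the six walls alone. The paper needs the separate estimate \eqref{eq:reflected-boundary-margin} there, and your plan neither identifies nor treats this boundary. You also only gesture at the degenerate case $(r,s,t)=(1,0,0)$, where the normalized kernel has an extra leading zero.
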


\begin{proof}
If $x=1$, the normalized kernel is $[M]_q[y]_q[z]_q$.  If
$(r,s,t)=(1,0,0)$ in the notation of \eqref{eq:gap-parameters}, then
\eqref{eq:G-positive}, after deletion of one additional leading $q$, is
$[M]_q[M+1]_q$.  Let $A(q)$ denote the corresponding product.  The
coefficients of $[d+1]_qA(q)$ through degree $d$ are
exactly $P_0,\ldots,P_d$.  This finite product is log-concave by
\cref{lem:convolution}, so \eqref{eq:cumulative-prefix-margin} follows.  We
henceforth exclude these two cases.

Put
\begin{equation}\label{eq:prefix-gaps}
  r=y-x,
  \qquad s=z-y,
  \qquad t=M+2-z,
  \qquad \tau=M-x+1.
\end{equation}
After removing the monomial $q^{x-1}$, the decomposition
\eqref{eq:G-positive} reads
\begin{align}
K={}&[z]_q[y]_q[M-x+1]_q\notag\\
 &+q^r[z]_q[x-1]_q[M-y+2]_q\notag\\
 &+q^{r+s}[y-1]_q[x-1]_q[M-z+3]_q.
\label{eq:prefix-positive-decomposition}
\end{align}
Multiplication by $1-q$ gives
\begin{equation}\label{eq:reflected-identity}
  (1-q)K=U-q^\tau V,
\end{equation}
where
\begin{align}
 U={}&[y]_q[z]_q+q^r[x-1]_q[z]_q
       +q^{r+s}[x-1]_q[y-1]_q,
 \label{eq:U-definition}\\
 V={}&[y]_q[z]_q+q[x-1]_q[z]_q
       +q^2[x-1]_q[y-1]_q.
 \label{eq:V-definition}
\end{align}
Define formal power series
\begin{equation}\label{eq:A-B-series}
  \mathcal A(q)=\frac{U(q)}{(1-q)^2},
  \qquad
  \mathcal B(q)=\frac{V(q)}{(1-q)^2}.
\end{equation}
Since $K/(1-q)$ is the generating series for the cumulative coefficients,
\begin{equation}\label{eq:prefix-reflected-series}
  \sum_{k\geq0}P_kq^k
  =\mathcal A(q)-q^\tau\mathcal B(q).
\end{equation}

\smallskip
\noindent\emph{The unreflected sequence.}
Write $u_i=[q^i]U$, $A_k=[q^k]\mathcal A$, and $A_{-1}=0$.  Set
\[
  D_k=A_k-A_{k-1}=\sum_{i=0}^{k}u_i,
  \qquad Q_k=A_k^2-A_{k-1}A_{k+1}.
\]
A direct cancellation gives, for $k\geq1$,
\begin{equation}
 Q_k-Q_{k-1}
 =D_ku_k+A_{k-1}(u_k-u_{k+1}).
 \label{eq:unreflected-margin-two}
\end{equation}
The first differences of the three summands in
\eqref{eq:U-definition} have positive-slope intervals
\begin{equation}\label{eq:U-positive-slopes}
  [0,y-1],
  \qquad [r,y-2],
  \qquad [r+s,z-2],
\end{equation}
respectively; every negative-slope interval begins at or after $z-1$.
Thus $(u_i)$ is nondecreasing through degree $z-2$ and nonincreasing from
degree $z-1$ onward.  Equation \eqref{eq:unreflected-margin-two} propagates
$Q_k\geq0$ for $k\geq z-1$.  If $z\leq2$, the initial margin
$Q_0=A_0^2$ suffices.  It remains to prove
\begin{equation}\label{eq:early-unreflected-margins}
  Q_k\geq0\qquad(1\leq k\leq z-2).
\end{equation}

\smallskip
\noindent\emph{The six-wall reduction.}
For an integer wall $W$, set
\begin{equation}\label{eq:cubic-wall}
 T_W(n)=
 \begin{cases}
  \binom{n-W+3}{3},&n\geq W,\\
  0,&n<W.
 \end{cases}
\end{equation}
If $W_i$ are the active monomial walls of $(1-q)^2U$, with signs
$\epsilon_i$, then
\begin{equation}\label{eq:wall-expansion-A}
  A_n=\sum_i\epsilon_iT_{W_i}(n).
\end{equation}
Through the degrees relevant to \eqref{eq:early-unreflected-margins}, the
baseline signed walls are
\begin{equation}\label{eq:six-baseline-walls}
  1+q^r+q^{r+s}
  -q^{x+r-1}-q^{x+r}-q^{x+r+s-1}.
\end{equation}
When $r=0$, one further negative wall can enter only at the last tested
coefficient $A_{k+1}$.  It decreases that coefficient and therefore
increases $Q_k$; it is consequently enough to analyze the six walls in
\eqref{eq:six-baseline-walls}.

Fix $k\geq1$, put $K_0=k+1$, and suppose the walls active at $A_{k+1}$
are
\[
  W_0\leq\cdots\leq W_j\leq K_0.
\]
Define
\begin{equation}\label{eq:wall-moments}
  v_i=K_0-W_i+1\geq1,
  \qquad
  \mu_p=\sum_{i=0}^{j}\epsilon_iv_i^p.
\end{equation}
For $\nu=-1,0,1$, the cubic-wall formula is
\begin{equation}\label{eq:three-wall-values}
  A_{k+\nu}=\sum_{i=0}^{j}\epsilon_i
  \binom{K_0+\nu-W_i+2}{3}.
\end{equation}
The polynomial convention in \eqref{eq:three-wall-values} remains literal
for a newly active wall, because $\binom13=\binom23=0$.  Pascal's identity
gives
\[
 A_k-A_{k-1}=\sum_i\epsilon_i\binom{v_i}{2},
 \qquad
 A_{k+1}-2A_k+A_{k-1}=\mu_1.
\]
Substitution yields the exact moment identity
\begin{equation}\label{eq:wall-moment-identity}
  12Q_k=3\mu_2^2-2\mu_1\mu_3-\mu_1^2.
\end{equation}

Every active prefix of either sign word encountered below has at least as
many positive as negative walls.  Pair each negative wall with an earlier
positive wall.  Since the walls are ordered, the corresponding values
satisfy $v_+\geq v_-$.  An unmatched positive value $v$ contributes the
integer interval $1\leq h\leq v$, while a pair contributes
$v_-+1\leq h\leq v_+$.  Let $\mathcal T$ be the resulting multiset of
positive integers and replace each $h\in\mathcal T$ by the positive odd
integer $2h-1$.  Denote the resulting multiset by $\mathcal X$ and put
\[
 \kappa=|\mathcal X|,
 \qquad S=\sum_{w\in\mathcal X}w,
 \qquad T=\sum_{w\in\mathcal X}w^2.
\]
Telescoping powers across the paired intervals gives
\[
  \mu_1=\kappa,
  \qquad \mu_2=S,
  \qquad \mu_3=\frac{3T+\kappa}{4}.
\]
Consequently,
\begin{equation}\label{eq:Psi-identity}
  8Q_k=\Psi(\mathcal X),
  \qquad
  \Psi(\mathcal X):=2S^2-\kappa T-\kappa^2.
\end{equation}

\begin{lemma}[Odd-interval inequality]\label{lem:odd-interval}
If $\mathcal X$ is the multiset union of one interval of consecutive
positive odd integers and at most two of its subintervals, then
$\Psi(\mathcal X)\geq0$.
\end{lemma}

\begin{proof}
An odd interval of length $n$ and center $c$ is
\[
  \{c-n+1,c-n+3,\ldots,c+n-1\}.
\]
Its cardinality, sum, and sum of squares are
\begin{equation}\label{eq:odd-interval-moments}
  n,
  \qquad nc,
  \qquad n\left(c^2+\frac{n^2-1}{3}\right).
\end{equation}
Translating all $\kappa$ entries by $\eta\geq0$ sends $(S,T)$ to
$(S+\kappa\eta,T+2\eta S+\kappa\eta^2)$ and therefore changes the
functional by
\begin{equation}\label{eq:Psi-translation}
  \Psi' -\Psi=2\kappa\eta S+\kappa^2\eta^2\geq0.
\end{equation}
It suffices to place the left endpoint of the outer interval at $1$.
Let its length be $n$, so its center is $n$.  Let the two subintervals
have lengths $a,b\leq n$, with length zero allowed, and centers $c_a,c_b$.
When a length is zero, choose its center arbitrarily in the displayed
interval; its value does not affect any moment.
Containment is equivalent to
\begin{equation}\label{eq:subinterval-centers}
  a\leq c_a\leq2n-a,
  \qquad b\leq c_b\leq2n-b.
\end{equation}
For fixed $c_b$, \eqref{eq:odd-interval-moments} shows that the coefficient
of $c_a^2$ in $\Psi$ is $a(a-n-b)\leq0$; the analogous statement holds
with $a$ and $b$ interchanged.  Thus $\Psi$ is separately concave in the
two centers, and its minimum on the rectangle
\eqref{eq:subinterval-centers} occurs at a corner.

Let $L$ and $R$ denote left- and right-aligned subintervals.  Direct
substitution gives
\begin{align}
 \Psi_{LR}-\Psi_{LL}
 &=4b(n-b)\bigl(2a^2+n(n-a+b)\bigr),\notag\\
 \Psi_{RL}-\Psi_{LL}
 &=4a(n-a)\bigl(2b^2+n(n-b+a)\bigr),\notag\\
 \Psi_{RR}-\Psi_{LL}
 &=4n(n+a+b)\bigl(a(n-a)+b(n-b)\bigr),
 \label{eq:corner-comparisons}
\end{align}
so the all-left corner is minimal.  Reorder $0\leq a\leq b\leq n$ and
write
\[
 b=a+d,
 \qquad n=a+d+e,
 \qquad d,e\geq0.
\]
If $p_i=a^i+b^i+n^i$, then
\begin{equation}\label{eq:Psi-LL}
 \Psi_{LL}=\frac23E,
 \qquad E=3p_2^2-2p_1p_3-p_1^2,
\end{equation}
and complete expansion gives
\begin{align}
E={}&e^4+2de^3
 +(6a^2+6ad+6d^2-1)e^2+2\mathfrak Be+\mathfrak C,
 \label{eq:E-expansion}\\
\mathfrak B={}&3a(2a^2-1)+12a^2d+9ad^2+2d(2d^2-1),\notag\\
\mathfrak C={}&9a^2(a^2-1)+12ad(2a^2-1)+24a^2d^2\notag\\
 &\quad+12ad^3+4d^2(d^2-1).
\end{align}
If $(a,d)\neq(0,0)$, the coefficient of $e^2$ is at least $5$.
Moreover, if $a\geq1$, then $2a^2-1\geq1$ and
$a^2-1\geq0$; if $d\geq1$, then $2d^2-1\geq1$ and
$d^2-1\geq0$.  These observations show term by term that
$\mathfrak B,\mathfrak C\geq0$: when one of $a,d$ is zero, every term
containing it vanishes, and the remaining displayed terms are still
nonnegative.  Hence every term of \eqref{eq:E-expansion} is nonnegative.
If $a=d=0$, then $n=e\geq1$ and
$E=e^2(e^2-1)\geq0$.  Thus $\Psi_{LL}\geq0$, completing the proof.
\end{proof}

We also need the following separated-block extension.

\begin{lemma}[Separated-block inequality]\label{lem:separated-block}
Let $1\leq a\leq b$ and $r,h\geq0$.  Form the multiset union of
\begin{equation}\label{eq:separated-blocks}
 [1,a],
 \qquad [a+h+1,a+h+r+b+1],
 \qquad [a+h+2,a+h+b+1],
\end{equation}
and replace every integer $t$ by $2t-1$.  The resulting multiset
$\mathcal X$ satisfies $\Psi(\mathcal X)\geq0$.
\end{lemma}

\begin{proof}
At $h=0$, the first two intervals concatenate to one outer interval and
the third is a subinterval, so \cref{lem:odd-interval} applies.  Increasing
$h$ translates the two high intervals but not the low one.  Their
cardinalities are $L=a$ and $H=r+2b+1\geq L$.  If $S_L,S_H$ are the
corresponding sums at $h=0$ and $\eta=2h$ is the translation in the odd
variable, direct substitution gives
\begin{equation}\label{eq:separated-translation}
 \Psi(h)-\Psi(0)
 =2\eta\bigl(2HS_L+(H-L)S_H\bigr)+\eta^2H(H-L)\geq0.
\end{equation}
\end{proof}

We now verify every active prefix of the six baseline walls.  If $s\leq x$,
cancel a coincident opposite pair when $s=x$ (or, equivalently, pad by an
empty pair).  The ordered sign word is $+++---$.  With
$v_0\geq\cdots\geq v_j$, use the pairings in
\cref{tab:plus-plus-plus}.
\begin{table}[ht]
\centering
\caption{Pairings for the wall word $+++---$.}
\label{tab:plus-plus-plus}
\begin{tabular}{c@{\qquad}lll}
\toprule
$j$&\multicolumn{3}{c}{$\mathcal T$}\\
\midrule
0&$[1,v_0]$&&\\
1&$[1,v_0]$&$[1,v_1]$&\\
2&$[1,v_0]$&$[1,v_1]$&$[1,v_2]$\\
3&$[1,v_0]$&$[1,v_1]$&$[v_3+1,v_2]$\\
4&$[1,v_0]$&$[v_4+1,v_1]$&$[v_3+1,v_2]$\\
5&$[v_5+1,v_0]$&$[v_4+1,v_1]$&$[v_3+1,v_2]$\\
\bottomrule
\end{tabular}
\end{table}
In every row, the first interval contains the other two.  If it is empty,
all three intervals are empty and $\Psi=0$; otherwise
\cref{lem:odd-interval} and \eqref{eq:Psi-identity} prove $Q_k\geq0$.

If $s>x$, put $b=x-1\geq1$ and $h=s-x\geq0$.  The ordered walls and their
signs are
\begin{equation}\label{eq:second-wall-word}
\begin{array}{c|rrrrrr}
i&0&1&2&3&4&5\\ \hline
W_i&0&r&r+b&r+b+1&r+b+1+h&r+2b+1+h\\
\epsilon_i&+&+&-&-&+&-
\end{array}
\end{equation}
and we use \cref{tab:plus-plus-minus}.
\begin{table}[ht]
\centering
\caption{Pairings for the wall word $++--+-$ in
\eqref{eq:second-wall-word}.}
\label{tab:plus-plus-minus}
\begin{tabular}{c@{\qquad}lll}
\toprule
$j$&\multicolumn{3}{c}{$\mathcal T$}\\
\midrule
0&$[1,v_0]$&&\\
1&$[1,v_0]$&$[1,v_1]$&\\
2&$[1,v_0]$&$[v_2+1,v_1]$&\\
3&$[v_3+1,v_0]$&$[v_2+1,v_1]$&\\
4&$[v_3+1,v_0]$&$[v_2+1,v_1]$&$[1,v_4]$\\
5&$[v_3+1,v_0]$&$[v_2+1,v_1]$&$[v_5+1,v_4]$\\
\bottomrule
\end{tabular}
\end{table}
Rows $0$--$3$ follow from \cref{lem:odd-interval}.  In row $4$, put
$p=v_4$.  Nonactivation of the next wall gives $1\leq p\leq b$, and the
three intervals are
\[
 [1,p],
 \quad[p+h+1,p+h+r+b+1],
 \quad[p+h+2,p+h+b+1],
\]
which is \eqref{eq:separated-blocks} with $a=p$.  In row $5$, put
$p=v_5\geq1$ and translate all three intervals down by $p$.  This gives
\[
 [1,b],
 \quad[b+h+1,r+2b+h+1],
 \quad[b+h+2,2b+h+1],
\]
which is \eqref{eq:separated-blocks} with $a=b$.  Reversing the translation
can only increase $\Psi$ by \eqref{eq:Psi-translation}.

At a tied wall position, all tied walls activate simultaneously, so a
formal cutoff inside a tied block is not a chamber.  Ties such as $r=0$ or
$h=0$, coincident cancellation, and empty paired intervals all preserve the
moment identities.  The two tables therefore exhaust the twelve possible
wall chambers.  This proves \eqref{eq:early-unreflected-margins}, and hence
\begin{equation}\label{eq:unreflected-log-concavity}
  A_k^2\geq A_{k-1}A_{k+1}\qquad(k\geq1).
\end{equation}

\smallskip
\noindent\emph{The reflected boundary and propagation.}
Let $p$ be the first mode of the primitive in \cref{thm:primitive}.  By
\eqref{eq:prefix-gaps},
\begin{equation}\label{eq:R-and-tau}
  R=r+s+t=M+2-x,
  \qquad \tau=R-1,
\end{equation}
and \cref{thm:primitive} gives $p\leq R$.  Only the boundary case $p=R$
requires a margin not already controlled by $\mathcal A$.  First-mode
minimality makes the rise at $R$ strict.  Write $v_i=[q^i]V$.  From
\eqref{eq:V-definition}, $v_0=1$ and $v_1=3$, while
\eqref{eq:reflected-identity} gives
\begin{equation}\label{eq:reflected-primitive-rise}
 \delta_R-\delta_{R-1}
 =(u_R-u_{R-1})-(v_1-v_0)
 =(u_R-u_{R-1})-2>0.
\end{equation}
Each of the three summands of $U$ contributes at most one to
$u_R-u_{R-1}$.  Hence all three contribute one, so their positive-slope
intervals in \eqref{eq:U-positive-slopes} all cover $R$.  The second interval
then gives $R\leq y-2$, or
\begin{equation}\label{eq:reflected-y-bound}
  y=x+r\geq R+2.
\end{equation}
Accordingly, the first two coefficients of
$\mathcal B=V/(1-q)^2$ are $1,5$.  The only new margin is
\begin{equation}\label{eq:reflected-boundary-margin}
 \mathcal M=(A_{R-1}-1)^2-A_{R-2}(A_R-5).
\end{equation}
Condition \eqref{eq:reflected-y-bound} places every negative wall of
\eqref{eq:six-baseline-walls} beyond degree $R$.  Put
\[
 u=s+t,
 \qquad X=A_{R-2},
 \qquad c=A_{R-1}-A_{R-2}.
\]
The three positive walls give
\begin{align}
 X&=\binom{R+1}{3}+\binom{u+1}{3}+\binom{t+1}{3},
 \label{eq:reflected-X}\\
 c&=\binom{R+1}{2}+\binom{u+1}{2}+\binom{t+1}{2}.
 \label{eq:reflected-c}
\end{align}
Since $A_R=A_{R-1}+c+R+u+t+3$,
\begin{equation}\label{eq:reflected-M-simple}
  \mathcal M=(c-1)^2-X(R+u+t).
\end{equation}
Write
\[
 \Sigma=R+u+t,
 \qquad \Theta=R^2+u^2+t^2,
 \qquad \Gamma=R^3+u^3+t^3.
\]
Then
\begin{equation}\label{eq:reflected-M-power-sums}
 12\mathcal M=3(\Theta+\Sigma-2)^2-2\Sigma(\Gamma-\Sigma).
\end{equation}
Here $R\geq u\geq t\geq0$.  Every case in which this boundary is needed has
$\Sigma\geq2$, since $\Sigma=1$ would force
$(r,s,t)=(1,0,0)$, already excluded.  Moreover,
\begin{equation}\label{eq:reflected-power-inequality}
 3\Theta-2R\Sigma
 =(R-u-t)^2+2(u^2-ut+t^2)\geq0.
\end{equation}
Since $\Gamma\leq R\Theta$ and $\Theta+\Sigma-2\geq\Theta$, we obtain
\[
 3(\Theta+\Sigma-2)^2
 \geq3\Theta^2
 \geq2R\Sigma\Theta
 \geq2\Sigma\Gamma
 \geq2\Sigma(\Gamma-\Sigma).
\]
Thus $\mathcal M\geq0$.

It remains to justify that no further margins require direct checking.
Extend $f_i=P_i=0$ to negative indices and write
\[
 \delta_i=f_i-f_{i-1},
 \qquad e_i=\delta_i-\delta_{i-1},
 \qquad \mathcal Q_k=P_k^2-P_{k-1}P_{k+1}.
\]
Direct calculation gives
\begin{align}
 \mathcal Q_k&=f_kf_{k+1}-P_k\delta_{k+1},
 \label{eq:prefix-margin-delta}\\
 \mathcal Q_{k+1}-\mathcal Q_k
 &=f_{k+1}\delta_{k+1}-P_ke_{k+2}.
 \label{eq:prefix-margin-propagation}
\end{align}
Let $m$ be the first mode of $f$.  For $p-1\leq k\leq m-1$, primitive
unimodality gives $e_{k+2}\leq0$, except possibly in the endpoint case
$m=\lfloor d/2\rfloor$ and $k=m-1$; there the same inequality is the central
parity consequence of palindromicity.  Moreover,
$f_{k+1},\delta_{k+1},P_k\geq0$.  Hence
\eqref{eq:prefix-margin-propagation} propagates every nonnegative margin.
For $k\geq m$, one has $\delta_{k+1}\leq0$, so
\eqref{eq:prefix-margin-delta} proves the margin directly.  It is therefore
enough to check $\mathcal Q_k\geq0$ for $k\leq p-1$.

If $\tau=0$, then $p\leq R=1$ and the base margin
$\mathcal Q_0=P_0^2$ suffices.  Assume $\tau\geq1$.  If
$p\leq\tau-1$, every required margin lies before the reflected term begins,
so \eqref{eq:unreflected-log-concavity} applies.  If $p=\tau$, the one
neighboring correction is favorable:
\begin{equation}\label{eq:neighboring-reflection}
 \mathcal Q_{\tau-1}
 =\bigl(A_{\tau-1}^2-A_{\tau-2}A_\tau\bigr)+A_{\tau-2}\geq0.
\end{equation}
If $p=\tau+1=R$, \eqref{eq:neighboring-reflection} and
\eqref{eq:reflected-boundary-margin} cover the last two required margins.
These cases exhaust $p\leq R=\tau+1$, and
\eqref{eq:prefix-margin-delta}--\eqref{eq:prefix-margin-propagation}
complete the proof of \eqref{eq:cumulative-prefix-margin}.
\end{proof}

%% file: sections/rank_three_completion.tex
\section{Completion of the quartic and rank-three theorems}
\label{sec:rank-three-completion}

The primitive and cumulative-prefix results now combine with finite-window
smoothing to prove \cref{thm:uniform-quartic}.

\begin{proof}[Proof of \cref{thm:uniform-quartic}]
Suppose first that $x=1$.  Then $[x-1]_q=0$, so
\[
  [L]_q\F_{M;1,y,z}
  =[L]_q[M]_q[y]_q[z]_q.
\]
This is a product of $q$-integers and is therefore log-concave with interval
support by \cref{lem:convolution}.

Now assume $x\geq2$.  The excluded all-maximal triple is the only sorted
triple in the domain with $x=M+2$.  Hence $x\leq M+1$, so both
\cref{thm:primitive,thm:cumulative-prefixes} apply.  Delete the monomial
factor in \eqref{eq:F-to-G} and write the normalized coefficient sequence
as $f=(f_0,\ldots,f_d)$.  By
\cref{prop:common-centered}, this sequence is nonnegative, palindromic,
unimodal, and positive throughout its support.  By \cref{thm:primitive},
its left-half difference sequence is nonnegative and unimodal, and its first
mode $p$ satisfies
\[
  p\leq M.
\]
By \cref{thm:cumulative-prefixes}, every cumulative-prefix margin of $f$
required in \cref{lem:finite-window} is nonnegative.  Since
$L\geq M+1$, one has
\[
  p\leq M<L.
\]
The finite-window smoothing lemma therefore proves that $[L]_qf(q)$ is
log-concave.

Restoring the deleted monomial only translates the coefficient sequence.
Moreover, $f$ is positive at every index of its support and $[L]_q$ has an
all-ones coefficient sequence, so their convolution is positive at every
index between its two support endpoints.  Thus the output has interval
support, completing the proof.
\end{proof}

We return to the coefficient formulas in
\cref{prop:rank-three-coefficients}.

\begin{theorem}[Genuine rank three]
\label{thm:rank-three}
Let $h$ be an abelian Hessenberg function whose complement-Ferrers partition
$\lambda$ satisfies $\rho(\lambda)=3$.  Then every nonzero elementary-basis
coefficient of $X_{G_h}(\xx;q)$ is log-concave with interval support.
\end{theorem}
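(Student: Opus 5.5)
The plan is to orient the diagram so that $\lambda_1=3$, split off the disconnected case, and then check that each of the four coefficient formulas in \cref{prop:rank-three-coefficients} is either a product of $q$-integers or a case of \cref{thm:uniform-quartic} times such a product.

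\textbf{Orientation and the disconnected case.} By the transpose symmetry \eqref{eq:transpose-symmetry}, replacing $h$ by $h^t$ changes no elementary coefficient. So I may assume $h$ has the normal form of \cref{prop:rank-three-domain}. In the disconnected case \eqref{eq:rank-three-disconnected-h}, $G_h$ is the disjoint union $K_3\sqcup K_{n-3}$. The chromatic quasisymmetric function is multiplicative over disjoint unions, and $X_{K_m}=[m]_q!\,e_m$. Hence the only nonzero coefficient is $[3]_q!\,[n-3]_q!$. This is a product of $q$-integers, so it is log-concave with interval support by \cref{lem:convolution}.

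\textbf{Connected case: the two product coefficients.} Here \cref{prop:abelian-support} leaves only $c_0,\dots,c_3$, given explicitly by \cref{prop:rank-three-coefficients}, and the parameters satisfy \eqref{eq:rank-three-domain}.
\begin{itemize}
\item The coefficient $c_0$ in \eqref{eq:rank-three-c0} is a nonzero product of $q$-integers, because $A,B,C\geq1$.
\item The coefficient $c_3$ in \eqref{eq:rank-three-c3} is a monomial times a product of $q$-integers. We have $N+1-A\geq1$ and $N+1-C\geq1$. The factor $[N+1-B]_q$ vanishes exactly when $B=N+1$, and then $c_3=0$ is ignored.
\end{itemize}
In both cases \cref{lem:convolution} applies, and the monomial only translates the support.

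\textbf{Connected case: $c_1$.} Since $\F_{M;x,y,z}$ is symmetric in $x,y,z$, I sort $(A,B,C)$ before applying \cref{thm:uniform-quartic} with $M=N\geq2$. The domain \eqref{eq:rank-three-domain} gives $1\leq A,B,C\leq N+1=M+1$. So the sorted triple lies in $[1,M+2]$ and is not all-maximal. Take $L=N+2\geq M+1$. Then $[N+2]_q\F_{N;A,B,C}$ is nonzero, nonnegative and log-concave with interval support. Convolving with the $q$-integer product $[N-1]_q!$ preserves this by \cref{lem:convolution}, and the factor $q$ is a shift.

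\textbf{Connected case: $c_2$.} The quartic here has $M=N-1\geq1$ and arguments $X=N+2-A$, $Y=N+2-B$, $Z=N+2-C$.
\begin{itemize}
\item From $2\leq A\leq N$ we get $2\leq X\leq N$.
\item From $A-1\leq B\leq N+1$ we get $1\leq Y\leq N+3-A\leq N+1=M+2$.
\item From $1\leq C\leq N$ we get $2\leq Z\leq N+1=M+2$.
\end{itemize}
So after sorting, the triple lies in $[1,M+2]$. Since $X\leq N<M+2$, it is not all-maximal. Take $L=N=M+1$; then \cref{thm:uniform-quartic} handles $[N]_q\F_{N-1;X,Y,Z}$. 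Multiplying by $[2]_q[N-2]_q!$, with $[0]_q!=1$ allowed when $N=2$, and by the monomial $q^{S-N-2}$ finishes via \cref{lem:convolution}.

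\textbf{Expected obstacle.} The only delicate step is the bookkeeping in the last two parts. It must confirm that every parameter quadruple in \eqref{eq:rank-three-domain} lands, after sorting, inside the hypotheses of \cref{thm:uniform-quartic}, including the boundary cases. These are $B=N+1$, $C=B-1$, $N=2$, and the requirement $L\geq M+1$, which is attained with equality for $c_2$. The multiplier $[N]_q$ there is exactly long enough, so I would double-check this inequality chain carefully.
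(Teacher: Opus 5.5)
Your proposal is correct and follows the paper's proof step for step. You transpose to width three, treat $K_3\sqcup K_{n-3}$ separately, read off $c_0$ and $c_3$ as $q$-integer products, and apply \cref{thm:uniform-quartic} with $(M,L)=(N,N+2)$ for $c_1$ and $(M,L)=(N-1,N)$ for $c_2$.

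One detail you skipped, which the paper checks explicitly: $q^{S-N-2}$ need not be an honest monomial. For example, $A=2$, $B=C=1$, $N\geq3$ gives $S-N-2=2-N<0$. This is harmless, for two reasons. First, $\F_{N-1;X,Y,Z}$ is divisible by $q^{x-1}$, and $S-N-2+x-1=S-\max\{A,B,C\}-1\geq0$; alternatively, $c_2\in\mathbb Z[q]$ simply because it is an $e$-coefficient. Second, a shift by any integer leaves the coefficient sequence unchanged.
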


\begin{proof}
By transpose symmetry, it is enough to use the width-three orientation in
\cref{prop:rank-three-domain}.

Suppose first that the graph is connected, so the parameters satisfy
\eqref{eq:rank-three-domain}.  The support theorem and
\cref{prop:rank-three-coefficients} show that the only coefficients to
consider are $c_0,c_1,c_2,c_3$.

The formulas \eqref{eq:rank-three-c0} and
\eqref{eq:rank-three-c3} are monomial shifts of products of $q$-integers.
All of their displayed factors have nonnegative lengths on
\eqref{eq:rank-three-domain}; if a factor is $[0]_q$, the corresponding
coefficient is zero and is omitted.  Every nonzero such product is
log-concave with interval support by \cref{lem:convolution}.

For $c_1$, let $x\leq y\leq z$ be the increasing rearrangement of
$(A,B,C)$.  The kernel $\F_{N;A,B,C}$ is symmetric in these parameters, and
\eqref{eq:rank-three-domain} gives
\[
  1\leq x\leq y\leq z\leq N+1<N+2.
\]
Thus the exceptional all-maximal triple cannot occur.  Apply
\cref{thm:uniform-quartic} with
\[
  M=N,
  \qquad L=N+2=M+2.
\]
It follows that $[N+2]_q\F_{N;A,B,C}$ is log-concave with interval support.
Multiplication by the remaining factor $q[N-1]_q!$ in
\eqref{eq:rank-three-c1} preserves these properties by
\cref{lem:convolution}.

For $c_2$, put
\begin{equation}\label{eq:completion-dual-parameters}
  X=N+2-A,
  \qquad Y=N+2-B,
  \qquad Z=N+2-C.
\end{equation}
Let $x\leq y\leq z$ be the increasing rearrangement of $(X,Y,Z)$.
Equation \eqref{eq:rank-three-domain} gives
\[
  1\leq x\leq y\leq z\leq N+1.
\]
Here the quartic parameter is $M=N-1$, so $M+2=N+1$, and the smoothing
length in \eqref{eq:rank-three-c2} is
\[
  L=N=M+1.
\]
The all-maximal triple is impossible because
$X=N+2-A\leq N=M+1$.  Put
\[
  \mathcal R(q)=[N]_q\F_{N-1;X,Y,Z}(q),
  \qquad H(q)=[2]_q[N-2]_q!,
  \qquad \eta=S-N-2.
\]
The uniform quartic theorem shows that $\mathcal R(q)$ is nonzero and
log-concave with interval support, while \eqref{eq:rank-three-c2} gives
$c_2(q)=q^\eta H(q)\mathcal R(q)$.  Let $m=\max\{A,B,C\}$, so the smallest
dual parameter is $x=N+2-m$.  If $x=1$, the first case in the proof of
\cref{thm:uniform-quartic} applies; if $x\geq2$,
\cref{prop:common-centered} applies.  In either case,
$\mathcal R(q)$ is divisible by $q^{x-1}$.  Moreover,
\[
  \eta+x-1=S-m-1\geq0,
\]
because the other two members of $\{A,B,C\}$ have sum at least $2$.
Therefore $q^\eta\mathcal R(q)$ is a polynomial, and multiplication by
$q^\eta$ only translates its nonzero coefficient sequence.  Convolution
with $H(q)$ now proves the assertion for $c_2$ by
\cref{lem:convolution}.

It remains to treat the disconnected orientation.  By
\cref{prop:rank-three-domain}, it is uniquely
\[
  h=(3,3,3,n,\ldots,n).
\]
In that case
\[
  G_h=K_3\sqcup K_{n-3}.
\]
Chromatic quasisymmetric functions multiply over disjoint unions, and
$X_{K_m}(\xx;q)=[m]_q!e_m(\xx)$.  Therefore
\begin{equation}\label{eq:rank-three-disconnected-xg}
  X_{G_h}(\xx;q)
  =[3]_q![n-3]_q!e_{(n-3,3)}(\xx),
\end{equation}
whose sole coefficient is again a product of $q$-integers.  This completes
the rank-three proof.
\end{proof}

%% file: sections/lower_rank.tex
\section{Complement-Ferrers ranks zero, one, and two}
\label{sec:lower-rank}

By transpose symmetry, it is enough to consider Hessenberg functions having
at most two entries smaller than $n$.

\subsection{Ranks zero and one}

At rank zero the graph is complete, and the standard inversion enumerator
gives
\begin{equation}\label{eq:rank-zero}
  X_{K_n}(\xx;q)=[n]_q!e_n(\xx).
\end{equation}
This coefficient is a product of $q$-integers.

Now let $h=(u,n,\ldots,n)$ and put $A=u-1$.  The support statement in
\cref{prop:abelian-support} leaves at most two coefficients.  Evaluating the
specialization in \cref{prop:abelian-specialization} at $\xi=0$ and $\xi=1$
gives them explicitly:
\begin{align}
  \coeff{e_n}{X_{G_h}(\xx;q)}
    &=[A]_q[n]_q[n-2]_q!,
  \label{eq:rank-one-c0}\\
  \coeff{e_{(n-1,1)}}{X_{G_h}(\xx;q)}
    &=q^A[n-2]_q![n-1-A]_q.
  \label{eq:rank-one-c1}
\end{align}
Every nonzero coefficient in
\eqref{eq:rank-one-c0}--\eqref{eq:rank-one-c1} is a monomial shift of a
product of $q$-integers, so \cref{lem:convolution} proves the rank-one case.

\subsection{The exact rank-two coefficients}

It remains to treat functions with two entries smaller than $n$; the
parametrization also retains the boundary where the rank drops to one.
After transpose, every such case of order $n\geq4$ has
\begin{equation}\label{eq:rank-two-h}
  h=(u,v,n,\ldots,n),
  \qquad
  2\leq u\leq v\leq n-1.
\end{equation}
Set
\begin{equation}\label{eq:rank-two-parameters}
  N=n-3,
  \qquad A=u-1,
  \qquad B=v-2.
\end{equation}
The parameter inequalities needed below are
\begin{equation}\label{eq:rank-two-domain}
  1\leq A\leq B+1,
  \qquad 0\leq B\leq N.
\end{equation}

If $B=0$, the domain forces $A=1$; then $h=(2,2,n,\ldots,n)$ and
\begin{equation}\label{eq:rank-two-disconnected}
  X_{G_h}(\xx;q)=[2]_q![n-2]_q!e_{(n-2,2)}(\xx).
\end{equation}
Henceforth assume $B\geq1$.

Only $e_n,e_{(n-1,1)},e_{(n-2,2)}$ can occur.  Cancelling
$\varphi_{n-2}(\xi)=\varphi_{N+1}(\xi)$ from the specialization identity gives
\begin{align}
  (\xi-[A]_q)(\xi-[B]_q)
  ={}&c_0\frac{(\xi-[N+1]_q)(\xi-[N+2]_q)}{[n]_q!}\notag\\
    &+c_1\frac{\xi(\xi-[N+1]_q)}{[n-1]_q!}
     +c_2\frac{\xi(\xi-1)}{[n-2]_q![2]_q},
  \label{eq:rank-two-quadratic}
\end{align}
where $c_j=\coeff{e_{(n-j,j)}}{X_{G_h}(\xx;q)}$.  Evaluation at
$\xi=0,1,[N+1]_q$, in that order, yields
\begin{align}
  c_0={}&[A]_q[B]_q[n]_q[N]_q!,
  \label{eq:rank-two-c0}\\
  c_1={}&q[N-1]_q![N+1]_q F_{N,A,B}(q),
  \label{eq:rank-two-c1}\\
  c_2={}&q^{A+B-1}[2]_q[N-1]_q!
          [N+1-A]_q[N+1-B]_q,
  \label{eq:rank-two-c2}
\end{align}
where
\begin{equation}\label{eq:rank-two-kernel}
  F_{N,A,B}
  =[N]_q[A]_q[B]_q-[N+2]_q[A-1]_q[B-1]_q.
\end{equation}
The formulas follow from \eqref{eq:q-integer-evaluation-identities}: at
$\xi=0$, $1$, and $[N+1]_q$, respectively, the evaluation system is
triangular in $c_0,c_1,c_2$.

\begin{theorem}[Ranks at most two]\label{thm:rank-at-most-two}
If $h$ is abelian and its complement-Ferrers partition $\lambda$ satisfies
$\rho(\lambda)\leq2$, then every nonzero elementary-basis coefficient of
$X_{G_h}(\xx;q)$ is log-concave with interval support.
\end{theorem}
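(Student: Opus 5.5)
The ranks zero and one are already settled by \eqref{eq:rank-zero}--\eqref{eq:rank-one-c1}. So I would treat only the two-entry case \eqref{eq:rank-two-h}, including the disconnected boundary \eqref{eq:rank-two-disconnected}. There the only coefficient is $[2]_q![n-2]_q!$, and \cref{lem:convolution} handles it.

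For $B\geq1$, the coefficients $c_0$ and $c_2$ in \eqref{eq:rank-two-c0} and \eqref{eq:rank-two-c2} are monomial shifts of products of $q$-integers with nonnegative lengths on \eqref{eq:rank-two-domain}. So \cref{lem:convolution} settles each one that is nonzero. Everything reduces to showing that $[N+1]_qF_{N,A,B}$ is nonzero, nonnegative and log-concave with interval support; the factor $q[N-1]_q!$ in \eqref{eq:rank-two-c1} is then absorbed by convolution. By symmetry of $F_{N,A,B}$ in $A,B$, I would assume $1\leq A\leq B\leq N+1$ after relabeling, which \eqref{eq:rank-two-domain} permits. If $A=1$, the kernel is $[N]_q[B]_q$ and we are done. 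So assume $A\geq2$.

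I would follow the rank-three template. First, insert one intermediate term and apply the transfer identity \eqref{eq:transfer} twice:
\[
F_{N,A,B}=q^{A-1}\Bigl([N+1-A]_q[B]_q+q^{B-A}[A-1]_q[N+2-B]_q\Bigr).
\]
Both summands have doubled symmetry center $N+A+B-3$. Hence, as in \cref{prop:common-centered}, the normalized sequence $f$ is palindromic and unimodal with interval support, and it is nonzero on the domain.

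Second, I would prove the analogue of \cref{thm:primitive}. By \eqref{eq:two-factor-interval}, the left-half difference sequence $\delta$ of $f$ is the sum of two interval indicators,
\[
[0,\min(N+1-A,B)-1]\qquad\text{and}\qquad [B-A,\,B-A+\min(A-1,N+2-B)-1].
\]
A sum of two indicators of intervals is unimodal except in the separated case. There the first interval ends before the second starts, so $\min(N+1-A,B)<B-A$, which forces $N+1-A<B-A$, i.e.\ $B>N+1$, contradicting the domain. So $\delta$ is unimodal, and its first mode $p$ is at most $\max(0,B-A)\leq N-1$, or lies in the first interval, which gives $p\leq N$. In every case $p<L=N+1$.

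Third, I need the cumulative-prefix inequalities \eqref{eq:prefix-hypothesis} for $k\leq\min\{N,d-1\}$, and I expect this to be the main obstacle. I would copy the proof of \cref{thm:cumulative-prefixes} with fewer walls. Multiplying by $1-q$ gives $(1-q)K=U-q^{\tau}V$ with $U=[B]_q+q^{B-A}[A-1]_q$. Then $U(1-q)$ has signed walls $1+q^{B-A}-q^{B-1}-q^{B}$, so the sign word is $++--$. Its active prefixes pair as in the first rows of \cref{tab:plus-plus-plus}, and each resulting multiset is an outer odd interval with at most one subinterval. Thus \cref{lem:odd-interval} and \eqref{eq:Psi-identity} (now with quadratic walls, so I would rederive the moment identity with $\binom{\cdot}{2}$ in place of $\binom{\cdot}{3}$; I expect a form like $2\mu_1^2$ versus $\mu_0\mu_2$ to arise) give the unreflected margins. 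The reflected boundary and the propagation step \eqref{eq:prefix-margin-delta}--\eqref{eq:prefix-margin-propagation} then go through verbatim using the mode bound.

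A possible shortcut I would try first is to check whether $[N+1]_qF_{N,A,B}$ coincides, up to a monomial and a $q$-integer factor, with a degenerate instance $[L]_q\F_{M;x,y,z}$ of \cref{thm:uniform-quartic}. Failing that, the direct two-wall argument above applies. With $p<N+1$ and the prefix margins in hand, \cref{lem:finite-window} with $L=N+1$ finishes the proof, and interval support follows exactly as at the end of the proof of \cref{thm:uniform-quartic}.
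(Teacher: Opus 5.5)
Your direct route works and differs from the paper's, which does no new analysis for the quadratic kernel. The paper's proof is the shortcut you planned to try first, made exact. Since $\F_{N-1;A,B,N}=[N-1]_qF_{N,A,B}$ and $[N-1]_q!=[N-2]_q![N-1]_q$, one has $c_1=q[N-2]_q![N+1]_q\F_{N-1;A,B,N}$. For $N\geq2$, the sorted triple of $(A,B,N)$ lies in $[1,N+1]=[1,M+2]$ with $M=N-1$. It contains $N<N+1$, so it is not all-maximal, and \cref{thm:uniform-quartic} with $L=N+1=M+2$ applies; $N=1$ is checked by hand. Note that $[N+1]_qF_{N,A,B}$ is not itself a quartic instance; its product with the factor $[N-1]_q$, already present in $c_1$, is. The paper's route takes two lines but relies on the full six-wall machinery. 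Yours proves the slightly stronger fact that $[N+1]_qF_{N,A,B}$ is itself log-concave, with much lighter tools. Your two-term decomposition, the two-interval primitive with $p\in\{0,B-A\}$, and the use of \cref{lem:finite-window} with $L=N+1$ are all correct.

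Two points in your prefix step need repair.

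First, \cref{lem:odd-interval} does not apply. It controls $\Psi=2S^2-\kappa T-\kappa^2$, which comes from cubic walls. For the quadratic walls $\binom{n-W+2}{2}$ of $U/(1-q)^2$, the same substitution gives $2Q_k=2\mu_1^2-\mu_0(\mu_2+\mu_1)=2\kappa^2-\mu_0(S+\kappa)$. Here $\mu_0\in\{0,1,2\}$ is the number of unmatched positive walls. Use the pairings of rows $0$--$3$ of \cref{tab:plus-plus-minus}; your word $++--$ is a prefix of that word, not of $+++---$. Chamber by chamber, $2Q_k$ is:
\begin{itemize}
\item $v_0(v_0-1)$;
\item $2v_0(v_1-1)+2v_1(v_0-1)$;
\item at least $a(a-1)+b(2a+2b-1)$, where $a=v_0$, $b=v_1-v_2$, using $v_1+v_2\leq2v_0$;
\item $2\kappa^2$.
\end{itemize}
All are nonnegative, so the fix is short, but it is a different inequality from the one you cited.

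Second, nothing goes through \emph{verbatim} at the reflected boundary, because that boundary never arises. Here $p\leq B-A\leq N+1-A=\tau$. So the only margin touching the reflected term is $\mathcal Q_{\tau-1}=Q_{\tau-1}+A_{\tau-2}$, using $[q^0]V=1$. It is needed only when $p=\tau$, that is, $B=N+1$ after sorting.
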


\begin{proof}
Ranks zero and one were proved in
\eqref{eq:rank-zero}--\eqref{eq:rank-one-c1}.  At rank two,
\eqref{eq:rank-two-disconnected} handles $B=0$.  For $B\geq1$, the
coefficients in \eqref{eq:rank-two-c0} and
\eqref{eq:rank-two-c2} are monomial shifts of products of $q$-integers.

It remains to consider $c_1$.  If $N=1$, then $B=1$ and
$A\in\{1,2\}$.  Directly,
\[
  F_{1,1,1}=1,
  \qquad
  F_{1,2,1}=[2]_q,
\]
so \eqref{eq:rank-two-c1} gives $c_1=q[2]_q$ or $q[2]_q^2$.
Now let $N\geq2$.  By \eqref{eq:intro-quartic},
\begin{align}
  \F_{N-1;A,B,N}
  &=[N-1]_q[A]_q[B]_q[N]_q
    -[N+2]_q[A-1]_q[B-1]_q[N-1]_q\notag\\
  &=[N-1]_qF_{N,A,B}.
  \label{eq:rank-two-quartic-specialization}
\end{align}
Let $x\leq y\leq z$ be the increasing rearrangement of $(A,B,N)$.  Since
$\F$ is symmetric in its last three parameters, it suffices to use this
sorted triple, which satisfies
\[
  1\leq x\leq y\leq z\leq N+1=(N-1)+2.
\]
Indeed, $1\leq A\leq B+1\leq N+1$ and $1\leq B\leq N$.
The triple is not all-maximal because it contains the entry $N<N+1$.
Thus \cref{thm:uniform-quartic}, with $M=N-1$ and $L=N+1=M+2$,
proves that $[N+1]_q\F_{N-1;A,B,N}$ is log-concave with interval support.
Using $[N-1]_q!=[N-2]_q![N-1]_q$ in
\eqref{eq:rank-two-c1} gives
\[
  c_1=q[N-2]_q![N+1]_q\F_{N-1;A,B,N}.
\]
The quartic is nonzero by \cref{thm:uniform-quartic}, and
\cref{lem:convolution} completes the rank-two case.

The rank-zero and rank-one formulas cover the remaining orders below $4$;
the only order-$3$ case with two entries smaller than $3$ is $h=(2,2,3)$,
already covered by \eqref{eq:rank-two-disconnected}.  Finally,
transpose symmetry covers the orientations with at most two rows rather
than at most two columns.
\end{proof}

\begin{proof}[Proof of \cref{thm:main}]
The case $\rho(\lambda)=3$ is \cref{thm:rank-three}, and the cases
$\rho(\lambda)\leq2$ are \cref{thm:rank-at-most-two}.  These cases exhaust
$\rho(\lambda)\leq3$, proving the theorem.
\end{proof}

%% file: sections/counterexample.tex
\section{The unrestricted conjecture fails}\label{sec:counterexample}

\begin{proof}[Proof of \cref{thm:intro-counterexample}]
The tuple
\[
  h=(2,4,4,6,7,10,10,10,10,12,12,13,13)
\]
is weakly increasing and satisfies $h(i)\geq i$, so it is a Hessenberg
function.  Since $h(i)\geq i+1$ for $1\leq i<13$, every consecutive edge
$\{i,i+1\}$ belongs to $G_h$; hence $G_h$ is connected.

By \cref{prop:counterexample-certificate}, the coefficient identity in
\cref{thm:intro-counterexample} holds.  If that coefficient is written
$\sum_j a_jq^j$, then
\[
  a_6^2=6^2=36<38=1\cdot38=a_5a_7.
\]
Thus it is not log-concave.  Finally, the graph is not abelian, because
\[
  h(h(1)+1)=h(3)=4\neq13.
\]
It therefore does not conflict with \cref{thm:main}.
\end{proof}

Abreu and Nigro reported exhaustive verification through order $12$
\cite[Section~5]{AbreuNigro}.  The counterexample occurs at the first order
beyond that reported computation.  We do not need, and do not claim, an
independent proof that order $13$ is minimal among all natural unit interval
graphs.

%% file: sections/verification_outlook.tex
\section{Consequences and open problems}
\label{sec:verification-outlook}

The two main results separate the false unrestricted conjecture from a
uniform positive regime: complement-Ferrers rank at most three forces
coefficientwise $e$-log-concavity in the abelian family, whereas the
counterexample is nonabelian.  We do not suggest that every counterexample
must be nonabelian.

Complement-Ferrers rank four is the first abelian layer not covered by the
present method.  This is an open boundary, not a failure boundary: we know of
no counterexample of complement-Ferrers rank four.  Rank-three interpolation is
cubic and produces the
four-factor difference in \eqref{eq:intro-quartic}; the next layer should
produce higher interpolation kernels for which the common-center and wall
methods may retain useful structure.  Two concrete problems are therefore:
\begin{enumerate}
 \item determine whether every elementary coefficient is log-concave for all
 abelian Hessenberg functions; and
 \item identify graph-theoretic conditions on nonabelian natural unit
 interval graphs that force log-concavity or systematically permit failure.
\end{enumerate}
The algebraic domain already matters for the quartic family:
\eqref{eq:all-max-negative} shows that the all-maximal quartic is negative,
while
\cref{prop:rank-three-domain} excludes its all-maximal tuple from the
graph-derived parameters.

\subsection*{Verification, provenance, and code}

The proofs of \cref{thm:uniform-quartic,thm:rank-three,thm:main} are symbolic
and uniform.  The included rank-three verifier independently checks the
coefficient formulas through $N=18$, exact interpolation through $N=10$,
modular interpolation through $N=60$, every rank-two case through $N=40$,
and direct coloring reconstructions through order $7$.  A second
program verifies the counterexample from proper colorings and an exact basis
conversion.  Both are deterministic, use only standard-library integer or
rational arithmetic, and require CPython~3.10 or later.  The arXiv source
bundle includes the programs, a reproducibility guide, and SHA-256 digests.

\ifjournalversion
The counterexample, its recurrence, and its exact certificate first appeared
in the four-page version~1 of the author's preprint
\cite{KafidovCounterexample}.  They are adapted and reproduced here so that
the negative result and the positive abelian theorem can be evaluated
together; that first version contains no form of the rank-at-most-three
theorem.
\else
The counterexample, its recurrence, and its exact certificate first appeared
in version~1 of this article.  They are adapted and reproduced here so that
the negative result and the positive abelian theorem can be evaluated
together; version~1 contains no form of the rank-at-most-three theorem.
\fi

\subsection*{Use of generative-AI tools}

OpenAI's ChatGPT with Codex assisted with exploration, verification code,
literature searches, proof organization, drafting, and editing.  Mathematical
claims rest on the arguments and exact certificate presented here, and the
author remains responsible for the statements, computations, citations, and
submission.

%% file: sections/appendix_counterexample.tex
\section{Exact certificate for the counterexample}
\label{app:counterexample-certificate}

This appendix gives the mathematical specification of a fully reproducible
exact certificate for the coefficient in \cref{thm:intro-counterexample};
the accompanying program executes the calculation.  The calculation starts
directly from proper colorings and then performs an exact change of basis.  No
specialized symmetric-function software, random choice, or floating-point
arithmetic is involved.

\subsection{Fixed-content proper colorings}

Fix a composition $\mathbf r\vDash n$, written
$\mathbf r=(r_1,\ldots,r_s)$.  Its fixed-content monomial coefficient is
\begin{equation}\label{eq:fixed-monomial-coefficient}
 M_{\mathbf r}(q)
 =\coeff{x_1^{r_1}\cdots x_s^{r_s}}{X_{G_h}(\xx;q)}.
\end{equation}
For disjoint vertex sets $I,J\subseteq[n]$, define
\begin{equation}\label{eq:crossing-statistic}
 \operatorname{cr}_{G_h}(I,J)
 =\#\{\{u,v\}\in E(G_h):u<v,\ u\in I,\ v\in J\}.
\end{equation}
If $W\subseteq[n]$ and $\mathbf r\vDash|W|$, write
$\mathsf C(W;\mathbf r)$ for the ascent enumerator of proper colorings of
$G_h[W]$ whose successive color-class sizes are $\mathbf r$.

\begin{lemma}[Fixed-content recurrence]\label{lem:fixed-content-recurrence}
If $\mathbf r=(r,\mathbf s)$, then
\begin{equation}\label{eq:coloring-recurrence}
 \mathsf C(W;(r,\mathbf s))
 =\sum_{\substack{I\subseteq W,\ |I|=r\\I\ \mathrm{independent}}}
 q^{\operatorname{cr}_{G_h}(I,W\setminus I)}
 \mathsf C(W\setminus I;\mathbf s),
\end{equation}
with $\mathsf C(\varnothing;\varnothing)=1$ and
$\mathsf C(W;\varnothing)=0$ for $W\neq\varnothing$.  Moreover,
$M_{\mathbf r}(q)=\mathsf C([n];\mathbf r)$.
\end{lemma}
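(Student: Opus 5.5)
The plan is to prove the recurrence by sorting proper colorings according to their smallest colour class, and then to identify $M_{\mathbf r}$ as the special case $W=[n]$.

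\emph{Setting up the bijection.} Interpret $\mathsf C(W;\mathbf r)$ as the sum of $q^{\asc}$ over proper colorings $\kappa:W\to\{1,\ldots,s\}$ of $G_h[W]$ with $|\kappa^{-1}(i)|=r_i$ for each $i$. Here ascents are counted using the edges of the induced subgraph, with the vertex order inherited from $[n]$. Given such a coloring with $\mathbf r=(r,\mathbf s)$, put $I=\kappa^{-1}(1)$. Properness makes $I$ independent, and $|I|=r$. The restriction of $\kappa$ to $W\setminus I$, with every colour lowered by one, is a proper coloring of $G_h[W\setminus I]$ with content $\mathbf s$. Conversely, an independent $r$-set $I$ together with such a coloring of the complement reassembles uniquely into a coloring of $W$. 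So the map is a bijection onto pairs $(I,\kappa')$.

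\emph{Tracking the statistic.} The edges of $G_h[W]$ fall into three kinds. There are no edges inside $I$. An edge inside $W\setminus I$ is an ascent for $\kappa$ exactly when it is an ascent for $\kappa'$, because lowering all colours by one preserves comparisons. For an edge $\{u,v\}$ with $u<v$ and exactly one endpoint in $I$, the endpoint in $I$ has colour $1$, which is strictly smaller than the other colour. Such an edge is therefore an ascent precisely when $u\in I$. These edges are counted by $\operatorname{cr}_{G_h}(I,W\setminus I)$; crossings are computed in $G_h$, but they agree with those in $G_h[W]$ since both endpoints lie in $W$. This gives $\asc(\kappa)=\operatorname{cr}_{G_h}(I,W\setminus I)+\asc(\kappa')$, and summing over the bijection yields \eqref{eq:coloring-recurrence}. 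The base cases follow because the empty coloring is the unique coloring of $\varnothing$ with empty content, while a nonempty $W$ admits no coloring of empty content.

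\emph{The monomial coefficient.} By \eqref{eq:xg-definition}, the coefficient of $x_1^{r_1}\cdots x_s^{r_s}$ collects exactly the proper colorings of $G_h$ with $|\kappa^{-1}(i)|=r_i$ and no other colours used. This is $\mathsf C([n];\mathbf r)$.

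The step needing the most care, though it is not deep, is making the orientation convention for ascents consistent. The definition counts $\kappa(i)<\kappa(j)$ for $i<j$, and the crossing statistic counts edges with the smaller endpoint in $I$. These match because colour $1$ is the minimum colour.
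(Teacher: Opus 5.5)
Your proof is correct and follows the same route as the paper: you split colorings by the class $I$ of the least colour, note that independence of $I$ is exactly properness there, identify the new ascents as the edges counted by $\operatorname{cr}_{G_h}(I,W\setminus I)$, and observe that removing $I$ leaves the same problem with content $\mathbf s$. Your version is more explicit than the paper's about the bookkeeping (lowering colours preserves ascents inside $W\setminus I$, and crossings in $G_h$ agree with those in $G_h[W]$), but the argument is the same.
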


\begin{proof}
The set $I$ is exactly the class receiving the least remaining color, and
properness is equivalent to its independence.  Every vertex in
$W\setminus I$ receives a larger color.  The ascents determined at this
stage are therefore precisely the edges counted by
$\operatorname{cr}_{G_h}(I,W\setminus I)$.  Removing $I$ leaves the same
problem for the remaining colors.  The least color class determines a
unique summand, so \eqref{eq:coloring-recurrence} is exhaustive and has no
overcounting.  Taking $W=[n]$ gives \eqref{eq:fixed-monomial-coefficient}.
\end{proof}

Although symmetry of $X_{G_h}(\xx;q)$ is known abstractly, the certificate
does not need to assume it.  For any $i_1<\cdots<i_s$, order-preserving
relabeling of the colors identifies the coefficient of
$x_{i_1}^{r_1}\cdots x_{i_s}^{r_s}$ with $M_{\mathbf r}(q)$.  The program
evaluates all $2^{12}=4096$ compositions of $13$ and verifies equality
whenever two compositions have the same decreasing rearrangement.  This
proves symmetry and identifies the common values as the coefficients of the
$101$ monomial symmetric functions $m_\nu$ indexed by partitions
$\nu\vdash13$.  Denote this common value by $M_\nu(q)$.

\subsection{Exact conversion from the monomial basis}

For partitions $\nu,\mu\vdash13$, set
\begin{equation}\label{eq:incidence-matrix}
 \mathsf M_{\nu,\mu}
 =\coeff{x_1^{\nu_1}\cdots x_{\ell(\nu)}^{\nu_{\ell(\nu)}}}{e_\mu}.
\end{equation}

\begin{lemma}[Zero-one matrix interpretation]\label{lem:zero-one-matrix}
The integer $\mathsf M_{\nu,\mu}$ equals the number of zero-one matrices
with labeled rows and columns, row-sum sequence $\mu$, and column-sum
sequence $\nu$.  Consequently, if
$X_{G_h}(\xx;q)=\sum_{\mu\vdash13}c_\mu(q)e_\mu(\xx)$, then
\begin{equation}\label{eq:monomial-elementary-system}
 M_\nu(q)=\sum_{\mu\vdash13}\mathsf M_{\nu,\mu}c_\mu(q).
\end{equation}
\end{lemma}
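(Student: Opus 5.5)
The plan is to expand the product $e_\mu=e_{\mu_1}e_{\mu_2}\cdots e_{\mu_\ell}$ and read off a single monomial coefficient. Write each factor as
\[
  e_{\mu_i}(\xx)=\sum_{S_i\subseteq\N,\ |S_i|=\mu_i}\prod_{c\in S_i}x_c .
\]
A term of the product is then an ordered choice of sets $(S_1,\ldots,S_\ell)$. We encode such a choice as a zero-one matrix with rows indexed by $i\in[\ell]$ and columns indexed by colors $c$, putting a one in entry $(i,c)$ exactly when $c\in S_i$. Row $i$ has sum $\mu_i$ automatically, and the monomial produced by the term is $\prod_c x_c^{(\text{column sum at }c)}$. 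So the coefficient of $x_1^{\nu_1}\cdots x_{\ell(\nu)}^{\nu_{\ell(\nu)}}$ counts exactly the matrices with $\ell(\nu)$ columns, column sums $\nu$, and row sums $\mu$.

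Two bookkeeping points need care. First, rows and columns are labeled: factors of $e_\mu$ with equal part sizes are still distinct positions in the product, so no quotient by symmetries occurs. Second, any column beyond $\ell(\nu)$ must be zero, because its variable has exponent $0$ in the target monomial; such columns can therefore be dropped. The map from terms to matrices is a bijection, since a term is recovered from its matrix row by row.

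For the system \eqref{eq:monomial-elementary-system}, I would take the coefficient of the fixed monomial $x_1^{\nu_1}\cdots x_{\ell(\nu)}^{\nu_{\ell(\nu)}}$ on both sides of $X_{G_h}=\sum_\mu c_\mu e_\mu$. Linearity of coefficient extraction over $\mathbb Q(q)$ gives $\sum_\mu \mathsf M_{\nu,\mu}c_\mu(q)$ on the right. On the left, the previous subsection's symmetry verification identifies the coefficient as $M_\nu(q)$, namely the value $M_{\mathbf r}$ at the composition $\mathbf r=\nu$ itself. The sum over $\mu\vdash13$ is finite, since $e_\mu$ is homogeneous of degree $13$.

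I do not expect a real obstacle. The only delicate step is making sure that labeled rows and columns correspond exactly to the ordered factors and the distinct variables, so that no multiplicity factor for repeated parts of $\mu$ or $\nu$ is missed.
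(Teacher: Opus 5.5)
Your proposal is correct and follows the paper's proof essentially verbatim. Both expand $e_\mu=\prod_i e_{\mu_i}$, let row $i$ record the variables chosen from the $i$th factor so that the column sums are the exponents, and compare the coefficient of $x_1^{\nu_1}\cdots x_{\ell(\nu)}^{\nu_{\ell(\nu)}}$ on both sides of the $e$-expansion; your remarks on labeled rows for repeated parts and on discarding zero columns beyond $\ell(\nu)$ make explicit bookkeeping that the paper leaves implicit.
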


\begin{proof}
In $e_\mu=\prod_i e_{\mu_i}$, row $i$ records the distinct variables
selected from the factor $e_{\mu_i}$.  Its row sum is $\mu_i$; column $j$
records the exponent of $x_j$ and has sum $\nu_j$.  This is a bijection
between choices contributing to the monomial in \eqref{eq:incidence-matrix}
and the asserted matrices.  Comparing monomial coefficients proves
\eqref{eq:monomial-elementary-system}.
\end{proof}

The matrix $\mathsf M$ is the transition matrix between two bases of the
degree-$13$ symmetric functions and is therefore invertible.  Each entry is
also computed by a finite recurrence: choose the columns occupied by the
first row, reduce their remaining capacities by one, and repeat.  Thus the
right side of \eqref{eq:monomial-elementary-system} is recoverable using
only subset enumeration and integer arithmetic.

\subsection{Certificate output and correctness}

\begin{proposition}\label{prop:counterexample-certificate}
Applying \cref{lem:fixed-content-recurrence,lem:zero-one-matrix} to
\[
 h^\star=(2,4,4,6,7,10,10,10,10,12,12,13,13)
\]
and solving \eqref{eq:monomial-elementary-system} over the rational numbers
gives the coefficient stated in \cref{thm:intro-counterexample}.
\end{proposition}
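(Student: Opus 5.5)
The statement is a finite computational certificate, so I will prove it by spelling out the exact computation and checking its output. I will not look for a structural argument.

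First, for the fixed graph $G_{h^\star}$ I will evaluate $\mathsf C([13];\mathbf r)$ by the recurrence of \cref{lem:fixed-content-recurrence}. I do this for every composition $\mathbf r\vDash13$, memoizing on the pair $(W,\mathbf s)$ of remaining vertex set and remaining composition. Independence of $I$ and the crossing statistic $\operatorname{cr}_{G_h}(I,W\setminus I)$ are read directly off $h^\star$. The outputs are the $4096$ polynomials $M_{\mathbf r}(q)\in\mathbb Z[q]$. As described before the proposition, I then check that $M_{\mathbf r}$ depends only on the decreasing rearrangement of $\mathbf r$. This certifies symmetry without appeal to \cite{ShareshianWachs} and gives the $101$ values $M_\nu(q)$.

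Second, I will assemble the $101\times101$ integer matrix $\mathsf M_{\nu,\mu}$ by the row-by-row zero-one matrix recurrence of \cref{lem:zero-one-matrix}. Ordering partitions compatibly with dominance makes $\mathsf M$ triangular up to the conjugation involution: $\mathsf M_{\nu,\mu}\neq0$ forces $\nu\leq\mu^t$, and $\mathsf M_{\mu^t,\mu}=1$. The system \eqref{eq:monomial-elementary-system} can therefore be solved exactly by back-substitution over $\mathbb Z$, coefficientwise in $q$, so no general rational elimination is needed. This gives $c_\mu(q)$ for all $\mu$, and in particular $c_{(6,5,1,1)}$. I then compare it with the displayed polynomial.

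As a sanity check, I will confirm that $\sum_\mu c_\mu(q)$ matches the known specialization of $X_G$. I will also confirm that each $c_\mu(1)$ agrees with the $q=1$ computation of the chromatic symmetric function.

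The main obstacle is computational feasibility and trust in the implementation, not mathematics. The recurrence ranges over subsets of a $13$-element set, and the composition suffixes are many, so memoization keyed on $(W,\mathbf s)$ is essential. Even so, the state space is at most $2^{13}$ subsets times the composition suffixes, which is manageable. To guard against coding errors, I would cross-check the $M_\nu$ against brute-force enumeration of proper colorings on a smaller random Hessenberg function. I would also verify that every solved $c_\mu$ has integer coefficients.
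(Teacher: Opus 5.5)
Your proposal matches the paper's certificate. You run the fixed-content recurrence over all $4096$ compositions, confirm symmetry by comparing rearrangements, build $\mathsf M$ from the zero-one recurrence, solve \eqref{eq:monomial-elementary-system} exactly, and read off $c_{(6,5,1,1)}$. The only deviation is that you solve by back-substitution using the Gale--Ryser unitriangularity ($\mathsf M_{\nu,\mu}\neq0\Rightarrow\nu\leq\mu^t$ and $\mathsf M_{\mu^t,\mu}=1$) rather than Gauss--Jordan over $\mathbb Q$. This is valid, gives the same unique solution, and makes integrality automatic instead of something to check afterwards.
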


\begin{proof}
The ancillary program \texttt{anc/verify\_counterexample.py} implements the
two recurrences literally.  It enumerates every independent subset of
$[13]$, evaluates all $4096$ ordered compositions, constructs all entries of
$\mathsf M$, and performs Gauss--Jordan elimination using exact rational
numbers.  It verifies integrality of every recovered elementary coefficient.
For the target coefficient, the vector indexed from $q^0$ is
\begin{equation}\label{eq:counterexample-vector}
 (0,0,0,0,0,1,6,38,128,257,362,400,362,257,128,38,6,1).
\end{equation}
After padding through degree $|E(G_{h^\star})|=22$, the program verifies
palindromicity, positivity on the support, unimodality, and exactly the two
failed log-concavity margins
\[
 (j,a_j^2,a_{j-1}a_{j+1})=(6,36,38),(16,36,38).
\]

The program uses only exact arithmetic from Python's standard library and
reads no external data.  The ancillary documentation records its source
digest, tested software version, command, and expected output.  Because the
program implements the two proved recurrences and returns
\eqref{eq:counterexample-vector}, the claimed coefficient follows.
\end{proof}